\newtheorem{thm}{Theorem}
\newtheorem{lem}{Lemma}
\newtheorem{cor}{Corollary}
\newtheorem{exm}{Example}
\newtheorem{rem}{Remark}
\def \e {{\mathbf e}}
\def \x {{\bf x}}
\def \y {{\bf y}}
\def \z {{\bf z}}
\def \w {{\bf w}}
\def \o {{\bf 0}}
\newcommand{\ob}[1]{\left(#1\right)}
\newcommand{\cb}[1]{\left\lbrace #1\right\rbrace}
\newcommand{\tb}[1]{\left[#1\right]}
\title{\it $q$-Laplacian State Transfer on Graphs with Involutions}
\author[1]{Swornalata Ojha}
\author[2]{Hiranmoy Pal}
\affil[1,2]{Department of Mathematics, National Institute of Technology Rourkela, India-769008.}
\date{\today}
\begin{document}
%\linenumbers
\maketitle

%%%%%%%%%%% Abstract %%%%%%%%%%%

\begin{abstract}

We study the existence of state transfer with respect to the $q$-Laplacian matrix of a graph equipped with a non-trivial involution. We show that the occurrence of perfect state transfer between certain pair (or plus) states in such a graph is equivalent to the existence of vertex state transfer in a subgraph induced by the involution with potentials. This yields infinite families of trees with potentials and unicyclic graphs of maximum degree three that exhibit perfect pair state transfer. In particular, we investigate vertex and pair state transfer in edge-perturbed complete bipartite graphs, cycles, and paths with potentials only at the end vertices.\\

\noindent {\it Keywords:} Graph, Involution, Adjacency matrix, $q$-Laplacian matrix, Continuous-time quantum walk,  Perfect state transfer.\\

\noindent {\it MSC: 15A16, 05C50, 05C76, 81P45.}
\end{abstract}

%%%%%%%%%%% Introduction %%%%%%%%%%%
\newpage

\section{Introduction}
Continuous-time quantum walks \cite{farhi} constitute a fundamental framework for studying various quantum transport phenomena in quantum spin networks. Such networks are modeled by a graph $G$, where vertices correspond to qubits and edges represent the interactions between them. Let $G = (V, E, w)$ be a weighted graph with $n$ vertices, where the weight function $w: E \rightarrow \mathbb{R}^+$ assigns a positive real weight to each edge of $G$. The \emph{adjacency matrix} $A \in \mathbb{R}^{n \times n}$ associated to $G$ is defined by
\[
A_{ij} = 
\begin{cases} 
w(i,j), & \text{whenever } (i,j) \in E, \\
0, & \text{otherwise}.
\end{cases}
\]
The \emph{degree matrix} $\Delta$ is a diagonal matrix with $\Delta_{ii}$ equal to the sum of the entries in the $i$th row of the adjacency matrix. The \emph{Laplacian matrix} $L$ and \emph{signless Laplacian matrix} $Q$ associated to $G$ are defined by $L = \Delta - A$ and $Q = \Delta + A$, respectively. The \emph{$q$-Laplacian matrix} $\mathscr{L}$ of $G$, introduced in \cite{bapat} with a non-zero real parameter $q$, is defined by
\begin{equation}\label{E1}
 \mathscr{L} = q L - (q - 1) I + q (q - 1) (\Delta - I).
\end{equation}
The $q$-Laplacian matrix reduces to the Laplacian matrix when $q = 1$, and to the signless Laplacian matrix when $q = -1$. A continuous-time quantum walk on $G$ with respect to the $q$-Laplacian matrix $\mathscr{L}$ is described by the transition matrix
\[
U_{\mathscr{L}}(t) := \exp{\ob{i t \mathscr{L}}}, \quad \text{where } t \in \mathbb{R}.
\]
A quantum state is represented by a positive semidefinite matrix of trace one, commonly referred to as a \emph{density matrix}. A density matrix is called a \emph{real pure state} if all its entries are real and it has rank one. A real pure state can be expressed in the form $D_\x = \x \x^T$, where $\x \in \mathbb{R}^n$ is a unit vector. Perfect state transfer in quantum communication networks was first introduced by Bose \cite{bose}. We say that \emph{perfect state transfer (PST)} relative to the $q$-Laplacian matrix $\mathscr{L}$ occurs at $\tau \in \mathbb{R}^+$ between two density matrices $D_1$ and $D_2$ if
\[
D_2 = U_\mathscr{L}(\tau) D_1 U_\mathscr{L}(-\tau).
\]
In particular, PST between the real pure states $D_{\x}$ and $D_{\y}$ is equivalent to the existence of a complex scalar $\gamma \in \mathbb{C}$ such that
\[U_\mathscr{L}(\tau) \x = \gamma \y.
\]
In this case, we simply say that PST occurs between the vector states $\x$ and $\y$. A vector state $\x$ is said to be \emph{periodic} with respect to $\mathscr{L}$ if PST occurs from $\x$ to itself. For a vertex $a$ in $G$, the characteristic vector $\e_a$ represents a \emph{vertex state}. More generally, a state of the form
\[
\frac{1}{\sqrt{1 + s^2}} \left( \e_a + s \e_b \right)
\]
is called an \emph{$s$-pair state} for some nonzero real number $s$. In particular, when $s = -1$, the state is referred to as a \emph{pair state}, and when $s = 1$, it is called a \emph{plus state}. If $\x$ and $\y$ are both vertex states, pair states, plus states, or $s$-pair states, then we use the terms \emph{vertex PST}, \emph{pair PST}, \emph{plus PST}, or \emph{$s$-pair PST}, respectively, to describe perfect state transfer between $\x$ and $\y$. Since the occurrence of vertex PST is a rare phenomenon \cite{god2}, a relaxation called \emph{pretty good state transfer} (PGST) was introduced in \cite{god1, vin}. Subsequently, PGST between real pure states was considered in \cite{pal9}. A graph $G$ is said to exhibit PGST between two linearly independent states $\x$ and $\y$ if there exists a sequence of real numbers $\{\tau_k\}$ such that
\[
\lim_{k \to \infty} U_\mathscr{L}\ob{\tau_k} \x = \gamma \y,
\]
where $\gamma \in \mathbb{C}$ has unit modulus. The existence of PST between real pure states is \emph{monogamous}, as shown in \cite[Lemma 5.1(3)]{god25}: if a state $\x$ admits PST to both states $\y$ and $\z$, then necessarily $\y = \z$. In contrast, PGST does not adhere to this restriction, as demonstrated in \cite[Example 4.1]{pal5}. Numerous classes of graphs have been shown to admit vertex state transfer, including paths \cite{banchi,kay2,kirk2}, circulant graphs \cite{ang, bavsic, pal6, pal61, pal3}, Cayley graphs \cite{ber, che, pal2}, distance-regular graphs \cite{cou3}, edge-perturbed graphs \cite{bose1, god4, pal8}, blow-up graphs \cite{mon2}, and joins \cite{alvir, ang, kirk1}, among others. However, it is observed in~\cite{cou0} that vertex PST with respect to the Laplacian matrix does not occur in a tree, except in the case of a path on two vertices. Moreover, among all trees, only the paths on two or three vertices exhibit PST between their end vertices under continuous-time quantum walk with respect to the adjacency matrix \cite{cou2024}.

PST between pair states with respect to the Laplacian matrix was first studied in \cite{chen}, where the authors provided characterizations of pair PST for various classes of graphs, including paths, cycles, and certain graph constructions. Further studies on pair PST in Cayley graphs appear in \cite{cao2021, cao2022}. In \cite{pal9}, the author investigated state transfer properties of edge-perturbed graphs with clusters and constructed an infinite family of non-regular graphs with maximum valency five that exhibit PST with respect to the adjacency, Laplacian, and signless Laplacian matrices, occurring between the same pair states and at the same time. Kim et al. \cite{kim} further generalized the notion of pair state transfer to \emph{$s$-pair state transfer}, where $s$ is a nonzero complex number. They provided characterizations of perfect $s$-pair state transfer in complete graphs, cycles, and antipodal distance-regular graphs that admit vertex PST. In \cite{pal7}, a framework for constructing graphs that exhibit pair state transfer with respect to the adjacency matrix was presented, using isomorphic branches possessing vertex state transfer. Additionally, a complete characterization of cycles admitting pair PGST was provided. State transfer between real pure states was considered in \cite{god25}, providing three fundamental results: (i) every periodic real pure state $\x$ admits PST with another real pure state $\y$; (ii) every connected graph admits PST between real pure states; and (iii) for any pair of real pure states $\x$ and $\y$, and for any time $\tau$, there exists a real symmetric matrix $M$ such that $\x$ and $\y$ exhibit PST relative to $M$ at time $\tau$. It also established a classification of PST between real pure states in graphs such as cycles, paths, and complete bipartite graphs.

The concept of sedentary families of graphs was first introduced by Godsil \cite{God5}, and was later extended in \cite{mont} to include \emph{sedentary vertices}. Subsequently, the existence of \emph{$C$-sedentary real pure states} was investigated in \cite{pal9}. A state $\x$ in a graph $G$ is said to be \emph{$C$-sedentary} if there exists a constant $0 < C \leq 1$ such that
\begin{equation}\label{E2}
\inf_{t > 0} \left| \x^T U_\mathscr{L}(t) \x \right| \geq C.
\end{equation}
If equality holds in \eqref{E2}, then $\x$ is called \emph{sharply $C$-sedentary}. Two distinct vertices $u$ and $v$ in a graph $G$ are said to be \emph{twins} if the following three conditions hold: (i) their neighborhoods, excluding themselves coincide, that is, $N_G(u) \setminus \{u,v\} = N_G(v) \setminus \{u,v\}$; (ii) the edges $(u,z)$ and $(v,z)$ have equal weights for every $z \in N_G(u) \setminus \{u,v\}$; and (iii) the potential assigned to vertices $u$ and $v$ is the same. If $u$ and $v$ are twin vertices in $G$, then by \cite[Lemma 2.9]{mont2}, the vector $\e_u - \e_v$ is an eigenvector of $\mathscr{L}$. Consequently, the pair state $\frac{1}{\sqrt{2}}(\e_u - \e_v)$ is sharply $1$-sedentary. It is worth mentioning that a $C$-sedentary state does not exhibit PGST.

This paper is organized as follows. In section \ref{sec2}, we examine state transfer in graphs with a non-trivial involution. We establish a connection between pair (plus) and vertex state transfer in graphs with involutions. In section \ref{sec3}, we begin with an investigation of pair PST in edge-perturbed complete bipartite graphs, followed by an analysis of both vertex PST and pair PST in cycles with a single edge perturbation. We then demonstrate the occurrence of pair PST in cycles modified by the addition of extra edges and potentials. Finally, we investigate vertex and pair PST in paths with potentials placed only at the end vertices.

We present a few preliminary definitions and results before proceeding to the next section. Let $G$ be an undirected weighted graph on $n$ vertices, possibly containing loops. Denote by $\theta_1 < \theta_2 < \cdots < \theta_d$ the distinct eigenvalues of the $q$-Laplacian matrix $\mathscr{L}$ associated with $G$, and let $F_j$ denote the corresponding orthogonal projection. The spectral decomposition of the transition matrix $U_\mathscr{L}(t)$ is given by
\[U_\mathscr{L}(t) = \exp{\left( i t \mathscr{L} \right)} = \sum_{j=1}^d e^{i t \theta_j} F_j.\]
The matrix $U_\mathscr{L}(t)$ is both symmetric and unitary, and it can be expressed as a polynomial in $\mathscr{L}$. The \emph{eigenvalue support} of a state $\x$ relative to $\mathscr{L}$, denoted by $\Lambda_\x(\mathscr{L})$, is defined by
$\Lambda_\x(\mathscr{L}) = \left\{ \theta_j : F_j \x \neq 0 \right\}.$ A state $\x$ is called a \emph{fixed state} relative to $\mathscr{L}$ whenever $|\Lambda_\x(\mathscr{L})| = 1$. It can be observed that a fixed state does not admit PST. Two linearly independent states $\x$ and $\y$ are said to be \emph{strongly cospectral} if for each $\theta_j \in \Lambda_\x(\mathscr{L})$, either $F_j \x = F_j \y$ or $F_j \x = - F_j \y$. As shown in \cite[Lemma 5.1(1)]{god25}, strong cospectrality is a necessary condition for PST to occur between two real pure states $\x$ and $\y.$

A symmetric matrix $B$ is said to be \emph{positive semidefinite} if $x^T B x \geq 0$ for all nonzero vectors $x \in \mathbb{R}^n$. One can observe that both Laplacian and signless Laplacian matrices associated with a graph are positive semidefinite. It is shown in \cite[Corollary 3.5]{bapat} that the $q$-Laplacian matrix of a tree is positive semidefinite whenever $q \in [-1,1]$. We now recall a special case of \emph{Cauchy's interlacing theorem} where a symmetric matrix is perturbed with a positive semidefinite matrix of rank one.

\begin{thm}\cite{so}\label{T1}
Let $B$ and $C$ be two symmetric matrices. Let $\alpha_1 \geq \cdots \geq \alpha_n$ and $\gamma_1  \geq \cdots \geq \gamma_n $ be the eigenvalues of $B$ and $B+C,$ respectively. If $C$ is a positive semidefinite matrix of rank one, then
$$\gamma_1 \geq \alpha_1 \geq \gamma_2 \geq \cdots\geq \alpha_{n-1} \geq \gamma_n \geq \alpha_n.$$
\end{thm}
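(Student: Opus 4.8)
The plan is to prove this as the special case of eigenvalue interlacing for rank-one perturbations, via the Courant--Fischer min-max characterization. Since $C$ is symmetric, positive semidefinite, and of rank one, I would first write $C = \vl\,\vl^T$ for some nonzero $\vl \in \Rl^n$. Two features of this factorization drive the whole argument: the quadratic form satisfies $x^T C x = \ob{\vl^T x}^2 \geq 0$ for every $x$, and $C$ annihilates the hyperplane $W = \cb{x \in \Rl^n : \vl^T x = 0}$, which has dimension $n-1$.

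I would then recall the min-max formulas for the ordered eigenvalues of a symmetric matrix $M$, namely
\[
\lambda_k(M) = \max_{\dim S = k}\ \min_{0 \neq x \in S} \frac{x^T M x}{x^T x} = \min_{\dim S = n-k+1}\ \max_{0 \neq x \in S} \frac{x^T M x}{x^T x}.
\]
The first bank of inequalities, $\gamma_k \geq \alpha_k$ for every $k$, follows immediately from positive semidefiniteness: because $x^T(B+C)x \geq x^T B x$ pointwise, comparing the two quadratic forms on any fixed optimizing subspace in the max-min formula yields $\gamma_k = \lambda_k(B+C) \geq \lambda_k(B) = \alpha_k$.

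The second bank, $\alpha_k \geq \gamma_{k+1}$ for $1 \leq k \leq n-1$, is where the rank-one hypothesis is essential, and this is the step I expect to be the crux. Let $u_1, \dots, u_n$ be an orthonormal eigenbasis of $B$ with $B u_j = \alpha_j u_j$, and set $T = \mathrm{span}\cb{u_k, \dots, u_n} \cap W$. A dimension count in $\Rl^n$ gives $\dim T \geq (n-k+1) + (n-1) - n = n-k$, so $T$ contains a subspace of dimension exactly $n-k$. The point is that for every $x \in T \subseteq W$ we have $Cx = 0$, hence $x^T(B+C)x = x^T B x$; moreover $x$ lies in the span of eigenvectors whose eigenvalues are all at most $\alpha_k$, so $x^T B x \leq \alpha_k\, x^T x$. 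Feeding this $(n-k)$-dimensional subspace into the formula $\gamma_{k+1} = \min_{\dim S = n-k} \max_{0 \neq x \in S} \frac{x^T(B+C)x}{x^T x}$ forces $\gamma_{k+1} \leq \alpha_k$. Interleaving the two banks then produces the stated chain $\gamma_1 \geq \alpha_1 \geq \gamma_2 \geq \cdots \geq \gamma_n \geq \alpha_n$.

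As a cross-check, I would note that the same conclusion follows directly from Weyl's inequalities: taking $\lambda_{k+1}(B+C) \leq \lambda_k(B) + \lambda_2(C)$ with $\lambda_2(C) = 0$ (the second-largest eigenvalue of a rank-one positive semidefinite matrix vanishes) gives $\gamma_{k+1} \leq \alpha_k$, while $\lambda_k(B+C) \geq \lambda_k(B) + \lambda_n(C)$ with $\lambda_n(C) \geq 0$ gives $\gamma_k \geq \alpha_k$. The min-max route is preferable here only because it is self-contained and makes transparent exactly where rank one enters, through the dimension of $W$.
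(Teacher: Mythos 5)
Your proposal is correct, but there is nothing in the paper to compare it against: the paper states Theorem \ref{T1} as a quoted result, citing \cite{so}, and gives no proof of its own. Judged on its own terms, your argument holds up. The factorization $C = vv^{T}$, the pointwise inequality $x^{T}(B+C)x \geq x^{T}Bx$ combined with the max--min characterization (giving $\gamma_k \geq \alpha_k$ for every $k$), and the dimension count $\dim\bigl(\mathrm{span}\{u_k,\dots,u_n\}\cap \ker C\bigr) \geq n-k$ fed into the min--max characterization (giving $\alpha_k \geq \gamma_{k+1}$ for $1 \leq k \leq n-1$) are all accurate, and interleaving the two banks yields exactly the stated chain. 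You also correctly isolate where each hypothesis is used: positive semidefiniteness alone gives the first bank, while the rank-one hypothesis enters only through the $(n-1)$-dimensional hyperplane $\ker C$ in the second. The Weyl-inequality shortcut you offer as a cross-check is likewise valid, since $\lambda_2(C)=0$ and $\lambda_n(C) \geq 0$ for a rank-one positive semidefinite matrix. In effect, your write-up supplies the self-contained proof that the paper outsources to the literature: the paper's citation buys brevity, whereas your derivation makes the interlacing statement verifiable within the document and transparently tied to its hypotheses.
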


\section{State transfer on graphs with involutions}\label{sec2}
An involution of a graph is an automorphism of order two. Consider a graph $G$ equipped with a non-trivial involution $\phi: V(G) \to V(G)$ where $\phi^2(v)=v$ for every vertex $v.$ Suppose the vertices in $G$ are assigned a potential $\eta: V(G) \to \mathbb{R}$ that is symmetric under the involution $\phi$, that is, $\eta(v) = \eta(\phi(v))$ for all $v \in V(G).$ Let $S=\cb{v \in V(G) ~|~ \phi(v)=v},$ the set of all fixed vertices of $\phi.$ A half graph $G'$ induced by the involution $\phi$ is obtained by selecting exactly one vertex from each orbit $(v, \phi(v)),$ for all $v \in V(G)\backslash S.$ The graph $G'$ is an induced subgraph of $G$ on the chosen vertices with a potential inherited by restricting $\eta$ to the vertices of $G'$. One can observe that $G$ contains at least two copies of $G'$. The total number of vertices of $G$ equals $ 2n + |S|,$ where $|S|$ is the number of fixed vertices of $\phi,$ and $n$ is the number of vertices in $G'.$ Let $\Delta$ be the degree matrix of $G$ without potential. The degree matrix of $G$ with potential is considered as $\Delta+\Delta',$ where $\Delta '$ is a diagonal matrix with $\Delta_{vv}'=\eta(v).$ With a suitable labeling of the vertices of $G,$ the $q$-Laplacian matrix in \eqref{E1} can be written in block form as
\[\mathscr{L}= \ob{1-q^2}I+q^2(\Delta+\Delta')-qA=\begin{bmatrix}
	\mathscr{L}'& A_\phi& A_S\\
	A_\phi &\mathscr{L}' &A_S\\
	A_S^T & A_S^T & \mathscr{L}_S
\end{bmatrix},\]
where $\mathscr{L}'$ and $\mathscr{L}_S$ are the $q$-Laplacian matrices of the half graph $G'$ and the subgraph induced by $S,$ respectively, including the original degrees of vertices as in $G.$ The submatrix $A_\phi$ is a symmetric matrix corresponding to the edges across the involution, and the submatrix $A_S$ describes the edges between $S$ and the vertices of the half graph. For example, the $q$-Laplacian matrix of the wheel $W_5$ in Figure \ref{f1} is given by

\[
\mathscr{L} =
\left[
\begin{array}{c|c|c}
\begin{array}{cc} 2q^2+1 & -q \\ -q & 2q^2+1 \end{array} &
\begin{array}{cc} -q & 0 \\ 0 & -q \end{array} &
\begin{array}{c} -q \\ -q \end{array} \\
\hline
\begin{array}{cc} -q & 0 \\ 0 & -q \end{array} &
\begin{array}{cc} 2q^2+1 & -q \\ -q & 2q^2+1 \end{array} &
\begin{array}{c} -q \\ -q \end{array} \\
\hline
\begin{array}{cc} -q & -q \end{array} &
\begin{array}{cc} -q & -q \end{array} &
\begin{array}{c} 3q^2+1 \end{array}
\end{array}
\right].
\]
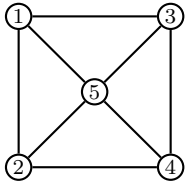
\begin{figure}
\centering
\begin{tikzpicture}[scale=1]
    \node[thick,circle,draw,inner sep=1pt] (C) at (0,0) {\scriptsize$2$};
    \node[thick,circle,draw,inner sep=1pt] (D) at (2,0) {\scriptsize$4$};
    \node[thick,circle,draw,inner sep=1pt] (E) at (0,2) {\scriptsize$1$};
    \node[thick,circle,draw,inner sep=1pt] (F) at (2,2) {\scriptsize$3$};
    \node[thick,circle,draw,inner sep=1pt] (A) at (1,1) {\scriptsize$5$};

    \draw[thick] (C) -- (D);
    \draw[thick] (C) -- (E);
    \draw[thick] (D) -- (F);
    \draw[thick] (E) -- (F);
    \draw[thick] (A) -- (D);
    \draw[thick] (A) -- (E);
    \draw[thick] (A) -- (F);
    \draw[thick] (A) -- (C);
\end{tikzpicture}
\caption{Wheel graph $W_5.$}
\label{f1}
\end{figure}
A spectral characterization relative to the adjacency matrix of a graph with involution is given in \cite[Lemma 2]{kempton}. We present an analogous observation for the $q$-Laplacian matrix of a graph with involution.
\begin{lem} \label{L1}
 Let $G$ be a graph with a non-trivial involution $\phi.$ Then the characteristic polynomial of the $q$-Laplacian matrix $\mathscr{L}$ of $G$ factors into $P_+(x)$ and $P_-(x)$ which are, respectively, the characteristic polynomials of $$\mathscr{L}_+:=\begin{bmatrix}
		\mathscr{L}'+A_\phi& A_S\\
		2A_S^T &\mathscr{L}_S
	\end{bmatrix}\quad \text{and}\quad \mathscr{L}_-:=\mathscr{L}'-A_\phi.$$
% where each of these matrices is as defined previously. 
Furthermore, the eigenvectors of $\mathscr{L}$ take the block form $[a~~a~~b]^T$ and $[c~~ -c~~ \o]^T,$ where $[a~~ b]^T$ is an eigenvector of $\mathscr{L}_+$, and $c$ an eigenvector of $\mathscr{L}_-.$ 
 \end{lem}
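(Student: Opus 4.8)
The plan is to block-diagonalize $\mathscr{L}$ by exploiting the $\mathbb{Z}_2$-symmetry of $\phi$, separating the symmetric combinations $[a~a~b]^T$ from the antisymmetric ones $[c~{-c}~\mathbf{0}]^T$. First I would introduce the symmetric orthogonal involution
\[
P = \frac{1}{\sqrt{2}}\begin{bmatrix} I_n & I_n & 0 \\ I_n & -I_n & 0 \\ 0 & 0 & \sqrt{2}\,I_{|S|} \end{bmatrix},
\]
which satisfies $P^T = P$ and $P^2 = I$, so conjugation by $P$ is an orthogonal similarity. A direct block multiplication, using that $\phi$ interchanges the first two block rows and columns of $\mathscr{L}$, then yields
\[
P^T \mathscr{L} P = \begin{bmatrix} \mathscr{L}' + A_\phi & 0 & \sqrt{2}\,A_S \\ 0 & \mathscr{L}' - A_\phi & 0 \\ \sqrt{2}\,A_S^T & 0 & \mathscr{L}_S \end{bmatrix}.
\]
After the permutation grouping the first and third blocks, $\mathscr{L}$ is similar to $M \oplus \mathscr{L}_-$, where $\mathscr{L}_- = \mathscr{L}' - A_\phi$ and $M = \begin{bmatrix} \mathscr{L}'+A_\phi & \sqrt{2}\,A_S \\ \sqrt{2}\,A_S^T & \mathscr{L}_S \end{bmatrix}$. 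Hence $\det(xI - \mathscr{L}) = \det(xI - M)\,\det(xI - \mathscr{L}_-)$, which already exhibits the factorization once $\det(xI - M)$ is identified with $P_+(x)$.

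The step I expect to require the most care is reconciling the symmetric block $M$ with the non-symmetric $\mathscr{L}_+$, whose off-diagonal blocks are $A_S$ and $2A_S^T$ rather than $\sqrt{2}\,A_S$ and $\sqrt{2}\,A_S^T$. To bridge this I would conjugate $M$ by the diagonal matrix $D = \mathrm{diag}\!\left(I_n,\ \tfrac{1}{\sqrt{2}}\,I_{|S|}\right)$ and verify that $D^{-1} M D = \mathscr{L}_+$: the scaling turns $\sqrt{2}\,A_S$ into $A_S$ in the upper block and $\sqrt{2}\,A_S^T$ into $2A_S^T$ in the lower block, while leaving the diagonal blocks fixed. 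Since similar matrices share a characteristic polynomial, $\det(xI - M) = P_+(x)$, and therefore $\det(xI - \mathscr{L}) = P_+(x)\,P_-(x)$. As a by-product, this similarity shows that $\mathscr{L}_+$, though not symmetric, is diagonalizable with real eigenvalues.

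For the eigenvector description I would argue directly rather than transport eigenvectors through $P$ and $D$. If $[a~b]^T$ is an eigenvector of $\mathscr{L}_+$ with eigenvalue $\lambda$, so that $(\mathscr{L}'+A_\phi)a + A_S b = \lambda a$ and $2A_S^T a + \mathscr{L}_S b = \lambda b$, then a blockwise multiplication gives $\mathscr{L}\,[a~a~b]^T = \lambda\,[a~a~b]^T$, the first two block rows each returning $(\mathscr{L}'+A_\phi)a + A_S b$ and the third returning $2A_S^T a + \mathscr{L}_S b$. Likewise, if $\mathscr{L}_- c = \mu c$, then in the first two block rows the diagonal block $\mathscr{L}'$ and the cross block $A_\phi$ combine to give $\pm(\mathscr{L}'-A_\phi)c = \pm\mu c$, while the two $A_S^T$ contributions cancel, so $\mathscr{L}\,[c~{-c}~\mathbf{0}]^T = \mu\,[c~{-c}~\mathbf{0}]^T$. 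To see these exhaust the spectrum, I would count dimensions: the symmetric vectors span an $(n+|S|)$-dimensional space and the antisymmetric ones an $n$-dimensional space, the two families are mutually orthogonal, and together they span $\mathbb{R}^{2n+|S|}$. Since $\mathscr{L}$ is symmetric this is a complete eigenbasis, and the eigenvalue multiplicities match those of $\mathscr{L}_+$ and $\mathscr{L}_-$ recorded in $P_+$ and $P_-$.
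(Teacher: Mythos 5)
Your proof is correct, and it reaches the factorization by a genuinely different route than the paper. The paper's proof consists essentially of your third step alone: it verifies blockwise that $[a~~a~~b]^T$ and $[c~~-c~~\mathbf{0}]^T$ are eigenvectors of $\mathscr{L}$ whenever $[a~~b]^T$ and $c$ are eigenvectors of $\mathscr{L}_+$ and $\mathscr{L}_-$, deduces that $P_+(x)$ and $P_-(x)$ each divide the characteristic polynomial $P(x)$, and concludes $P=P_+P_-$ because both factors are monic and their degrees sum to $\deg P$; the diagonal scaling $K=\mathrm{diag}(I,\sqrt{2}\,I)$ appears there only to note that $\mathscr{L}_+$ is diagonalizable. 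You instead exhibit an explicit orthogonal conjugation that block-diagonalizes $\mathscr{L}$ into $\widetilde{\mathscr{L}_+}\oplus\mathscr{L}_-$ and then identify $\widetilde{\mathscr{L}_+}$ with $\mathscr{L}_+$ by the same diagonal scaling (your $D$ is the paper's $K$). Your route buys two things. First, the identity $\det(xI-\mathscr{L})=P_+(x)P_-(x)$ falls out of multiplicativity of determinants with no divisibility bookkeeping; by contrast, knowing only that $P_+\mid P$ and $P_-\mid P$ with matching total degree does not by itself force $P=P_+P_-$ when $P_+$ and $P_-$ share roots (e.g.\ $P_+=P_-=x$, $P=x(x-1)$), so the paper's argument tacitly relies on the linear independence (indeed orthogonality) of the two lifted eigenvector families—exactly what your dimension count makes explicit. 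Second, your conjugating matrix is, up to column ordering, the matrix $M$ of the paper's Theorem \ref{T2}, so your computation simultaneously establishes the similarity proved separately there: exponentiating your identity gives the block-diagonal form of $U_\mathscr{L}(t)$ at once. One cosmetic caution: rename your conjugating matrix, since the paper reserves $P$ for the permutation matrix of the involution (used in Lemma \ref{L2}); and note that the lemma's phrase ``the eigenvectors of $\mathscr{L}$ take the block form'' should be read as ``there is an eigenbasis of this form,'' which is precisely what your orthogonality-plus-dimension argument delivers.
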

 \begin{proof}
The matrix $\mathscr{L}_+$ is diagonalizable as  $\widetilde{\mathscr{L}_+}= K^{-1}\mathscr{L}_+ K,$ where $$\widetilde{\mathscr{L}_+}=\begin{bmatrix}
		\mathscr{L}'+A_\phi&\sqrt{2} A_S\\
		\sqrt{2}A_S^T & \mathscr{L}_S
\end{bmatrix}\quad \text{and} \quad K=\begin{bmatrix}
		I& \o\\
		\o & \sqrt{2}I
	\end{bmatrix}. $$ 
Suppose $\lambda$ is an eigenvalue of $\mathscr{L}_+$ with eigenvector $[a~~ b]^T$. Then $(\mathscr{L}'+A_\phi)a+A_Sb=\lambda a$ and $2A_S^Ta+\mathscr{L}_Sb=\lambda b.$ Hence
$$\begin{bmatrix}
		\mathscr{L}'& A_\phi& A_S\\
		A_\phi &\mathscr{L}' &A_S\\
		A_S^T & A_S^T & \mathscr{L}_S  
\end{bmatrix}\begin{bmatrix}
		a\\a\\b
	\end{bmatrix}=\begin{bmatrix}
		(\mathscr{L}'+A_\phi)a+A_Sb\\
		(\mathscr{L}'+A_\phi)a+A_Sb\\
		2A_S^Ta+\mathscr{L}_Sb
	\end{bmatrix}=\lambda \begin{bmatrix}
		a\\a\\b  
	\end{bmatrix}.$$  
It follows that $\lambda$ is also an eigenvalue of $\mathscr{L}$ with the same multiplicity as in $\mathscr{L}_+$. Consequently, if $P(x)$ is the characteristic polynomial of $\mathscr{L},$ then $P_+(x)$ divides $P(x).$ Similarly, suppose that $\mu$ is an eigenvalue of $\mathscr{L}_{-}$ with eigenvector $c,$ then $[c~~ -c~~ \o]^T$ is an eigenvector of $\mathscr{L}$ associated with the eigenvalue $\mu,$ with the same multiplicity as in $\mathscr{L}_{-}$. Accordingly, the polynomial $P_-(x)$ divides $P(x).$ Since both $P_+(x)$ and $P_-(x)$ are monic polynomials, and their degrees add up to the degree of the polynomial $P(x),$ it follows that $P(x)= P_+(x) P_-(x).$
\end{proof}
We now derive a relationship between the transition matrices $U_\mathscr{L}(t)$, $U_{\mathscr{L}_{-}}(t)$, and $U_{\widetilde{\mathscr{L}_+}}(t)$, thereby establishing a connection between PST in a subgraph to PST in the original graph in the presence of a non-trivial involution. The following result is analogous to the one stated in \cite[Lemma 4]{pal7}.
 \begin{lem}\label{L2}
 Let $G$ be a graph equipped with a non-trivial involution $\phi$, and $G'$ be a half graph induced by $\phi$. If $U_\mathscr{L}(t)$ is the transition matrix associated with the $q$-Laplacian matrix of $G,$ then for $u \in V(G')$,
 \begin{equation}\label{E3}
 U_\mathscr{L}(t)\ob{\e_u-\e_{\phi(u)}}=(I-P)\sum_{\mu_r \in \sigma(\mathscr{L}_-)}e^{it\mu_r}F_r\e_u,
 \end{equation}
 where P is the matrix of the involution $\phi$, and if $F_r'$ denotes the orthogonal projection of $\mathscr{L}_-$ corresponding to the eigenvalue $\mu_r,$ then the matrix $F_r$ is given by
 $$F_r=\frac{1}{2}\begin{bmatrix}
 F_r'&-F_r'&\o\\
 -F_r'& F_r'& \o\\
 \o& \o &\o
 \end{bmatrix}.$$
 \end{lem}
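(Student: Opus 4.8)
The plan is to read off the spectral data of $\mathscr{L}$ from Lemma \ref{L1} and to use the fact that $\e_u-\e_{\phi(u)}$ is \emph{antisymmetric} under $\phi$, so that only the eigenspaces coming from $\mathscr{L}_-$ contribute to $U_\mathscr{L}(t)\ob{\e_u-\e_{\phi(u)}}$. With the vertices ordered as $V(G')$, then $\phi(V(G'))$, then $S$, the permutation matrix of $\phi$ is
\[P=\begin{bmatrix}\o&I&\o\\ I&\o&\o\\ \o&\o&I\end{bmatrix},\qquad P^2=I,\]
and for $u\in V(G')$ one has $P\e_u=\e_{\phi(u)}$, hence $\e_u-\e_{\phi(u)}=(I-P)\e_u$. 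First I would record the two families of eigenvectors of $\mathscr{L}$ from Lemma \ref{L1}: the symmetric vectors $[a~a~b]^T$ coming from $\mathscr{L}_+$, and the antisymmetric vectors $[c~-c~\o]^T$ coming from $\mathscr{L}_-$. Since $P$ fixes the former and negates the latter, the vectors $[c~-c~\o]^T$ span precisely the $(-1)$-eigenspace of $P$, which contains $\e_u-\e_{\phi(u)}$.

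Next I would verify that $F_r$ is the orthogonal projection of $\mathscr{L}$ onto the antisymmetric part of its $\mu_r$-eigenspace. If $\{c_i\}$ is an orthonormal basis of the $\mu_r$-eigenspace of $\mathscr{L}_-$, so that $F_r'=\sum_i c_ic_i^T$, then the normalized vectors $\tfrac{1}{\sqrt2}[c_i~-c_i~\o]^T$ are orthonormal $\mu_r$-eigenvectors of $\mathscr{L}$, and summing the associated rank-one projectors reproduces exactly the block matrix $F_r$ in the statement. In particular each column of $F_r$ lies in the $\mu_r$-eigenspace of $\mathscr{L}$, so $\mathscr{L}F_r=\mu_r F_r$ and therefore $U_\mathscr{L}(t)F_r=e^{it\mu_r}F_r$. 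A direct block computation gives the commutation relation $PF_r=F_rP=-F_r$, and, using the completeness $\sum_r F_r'=I$ of the spectral idempotents of $\mathscr{L}_-$, the key summation identity $\sum_{\mu_r\in\sigma(\mathscr{L}_-)}F_r=\tfrac12(I-P)$.

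With these facts the computation is short. Since $(I-P)^2=2(I-P)$, applying $\sum_r F_r=\tfrac12(I-P)$ to $\e_u-\e_{\phi(u)}=(I-P)\e_u$ yields $\ob{\sum_r F_r}\ob{\e_u-\e_{\phi(u)}}=\e_u-\e_{\phi(u)}$; that is, the antisymmetric projection fixes our vector. Consequently
\[U_\mathscr{L}(t)\ob{\e_u-\e_{\phi(u)}}=U_\mathscr{L}(t)\ob{\sum_{\mu_r\in\sigma(\mathscr{L}_-)}F_r}\ob{\e_u-\e_{\phi(u)}}=\sum_{\mu_r\in\sigma(\mathscr{L}_-)}e^{it\mu_r}F_r\ob{\e_u-\e_{\phi(u)}}.\]
Finally, $F_r\ob{\e_u-\e_{\phi(u)}}=F_r(I-P)\e_u=(I-P)F_r\e_u$ by the commutation $F_rP=PF_r$, and factoring $(I-P)$ out of the sum gives exactly \eqref{E3}.

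The step I expect to require the most care is the bookkeeping around coincidences between the spectra of $\mathscr{L}_+$ and $\mathscr{L}_-$: when a value $\mu_r$ is shared, $F_r$ is \emph{not} the full spectral idempotent of $\mathscr{L}$ at $\mu_r$, but only its antisymmetric summand. The argument above sidesteps this by never invoking the full spectral decomposition of $U_\mathscr{L}(t)$; it relies instead on the two robust facts $U_\mathscr{L}(t)F_r=e^{it\mu_r}F_r$, valid because $\operatorname{range}(F_r)$ lies in the $\mu_r$-eigenspace of $\mathscr{L}$ irrespective of any multiplicity contributed by $\mathscr{L}_+$, together with $\sum_r F_r=\tfrac12(I-P)$, so no separation of shared eigenvalues is ever needed.
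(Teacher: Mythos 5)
Your proof is correct, but its logical skeleton differs from the paper's in a way worth noting. The paper starts from the full spectral decomposition $U_\mathscr{L}(t)=\sum_{\mu_r\in\sigma(\mathscr{L})}e^{it\mu_r}E_r$, invokes Lemma \ref{L1} to argue that only the antisymmetric eigenvectors $[c~\,{-c}~\,\o]^T$ contribute when the sum is applied to $\e_u-\e_{\phi(u)}$, and then uses the fact that $E_r$ is a polynomial in $\mathscr{L}$ (hence commutes with $P$) to write $E_r(I-P)\e_u=(I-P)E_r\e_u=(I-P)F_r\e_u$; the last equality silently uses that the symmetric summand of the full idempotent $E_r$ is killed by $I-P$ when $\mu_r$ happens to also be an eigenvalue of $\mathscr{L}_+$. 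You never touch the full spectral decomposition: instead you establish the three identities $U_\mathscr{L}(t)F_r=e^{it\mu_r}F_r$, $PF_r=F_rP=-F_r$, and $\sum_{\mu_r\in\sigma(\mathscr{L}_-)}F_r=\tfrac12(I-P)$ by direct block computation, and then the result falls out of the projector algebra $(I-P)^2=2(I-P)$. What the paper's route buys is brevity and the reuse of standard spectral machinery; what your route buys is that the eigenvalue-coincidence issue between $\mathscr{L}_+$ and $\mathscr{L}_-$ (where $F_r\neq E_r$) never arises, since you only ever need that $\operatorname{range}(F_r)$ sits inside the $\mu_r$-eigenspace of $\mathscr{L}$, not that $F_r$ is a spectral idempotent of $\mathscr{L}$. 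Your closing remark correctly identifies exactly the step where the paper's argument is glossed, and your resolution of it is sound.
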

 \begin{proof}
The spectral decomposition of the transition matrix $U_\mathscr{L}(t)$ gives
  \begin{equation}\label{E4}
  U_\mathscr{L}(t)\ob{\e_u-\e_{\phi(u)}}=\sum_{\mu_r \in \sigma(\mathscr{L})}e^{it\mu_r}E_r\ob{\e_u-\e_{\phi(u)}},    
  \end{equation}
where $\sigma(\mathscr{L})$ denotes the $q$-Laplacian spectrum, and $E_r$ denotes the orthogonal projection associated with eigenvalue $\mu_r$. It follows from Lemma \ref{L1} that the only eigenvectors of $\mathscr{L}$ that contribute to the sum of \eqref{E4} are of the form $[c~~ -c~~ \o]^T,$ where $c $ is an eigenvector of $\mathscr{L}_-.$ Since $E_r$ is a polynomial in $\mathscr{L},$
\[E_r\ob{\e_u-\e_{\phi(u)}}=E_r(I-P)\e_u=(I-P)E_r\e_u=(I-P)F_r\e_u.\]
This completes the proof.
 \end{proof}
 We next establish that the transition matrix $U_\mathscr{L}(t)$ is similar to a block diagonal matrix where the transition matrices corresponding to  $\mathscr{L}_{-}$ and $\widetilde{\mathscr{L}_+}$ appear as the diagonal blocks.
\begin{thm}\label{T2}
Let $G$ be a graph equipped with a non-trivial involution $\phi$, and $G'$ be a half graph induced by $\phi$. Suppose $M$ is the matrix whose columns are the vectors in $\mathcal{B}= \cb{\frac{1}{\sqrt{2} }\ob{\e_u-\e_{\phi(u)} }~|~  u \in V(G')},$ $\mathcal{C}=\cb{\frac{1}{\sqrt{2} }\ob{\e_u+\e_{\phi(u)} }~|~ u \in V(G')},$ and $\mathcal{D}=\cb{\e_u~|~ u \in S},$ respectively. If $\mathscr{L}$ is the $q$-Laplacian matrix associated with $G$, then for every $t \in \mathbb{R},$
\[M^T U_\mathscr{L}(t) M= \begin{bmatrix}	U_{\mathscr{L}_-}(t) & \o\\	\o & U_{\widetilde{\mathscr{L}_+}}(t)
\end{bmatrix},\]
where $\mathscr{L}',~\mathscr{L}_{-},~\mathscr{L}_{S}, ~A_\phi, ~A_S$ are as defined previously, and
\[\widetilde{\mathscr{L}_+}=\begin{bmatrix}	\mathscr{L}'+A_\phi&\sqrt{2} A_S\\
\sqrt{2}A_S^T & \mathscr{L}_S
\end{bmatrix}.\]
\end{thm}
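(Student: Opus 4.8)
The plan is to reduce the statement to a single change-of-basis identity at the level of the $q$-Laplacian matrix itself, and then transfer it to the exponential. Ordering the vertices of $G$ as (half graph $G'$, its image $\phi(G')$, fixed set $S$), the matrix $M$ acquires the block form
\[
M = \begin{bmatrix} \tfrac{1}{\sqrt{2}} I & \tfrac{1}{\sqrt{2}} I & \o \\ -\tfrac{1}{\sqrt{2}} I & \tfrac{1}{\sqrt{2}} I & \o \\ \o & \o & I \end{bmatrix},
\]
whose first block-column lists the vectors of $\mathcal{B}$, the second those of $\mathcal{C}$, and the third those of $\mathcal{D}$. First I would check that $M$ is orthogonal: since $u$ and $\phi(u)$ are distinct vertices for every $u \in V(G')$, the families $\mathcal{B}$, $\mathcal{C}$, $\mathcal{D}$ consist of mutually orthogonal unit vectors, so $M^T M = I$ and hence $M^T = M^{-1}$. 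A one-line block multiplication confirms this.

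The heart of the argument is to compute $M^T \mathscr{L} M$ and show that it is block diagonal with the two stated blocks. Using the block form of $\mathscr{L}$ from Lemma \ref{L1}, I would first form $\mathscr{L} M$ and then multiply on the left by $M^T$. The symmetry imposed by the involution --- namely that the two diagonal blocks along $G'$ and $\phi(G')$ are both $\mathscr{L}'$, that the coupling between them is the symmetric matrix $A_\phi$, and that both are joined to $S$ by the same block $A_S$ --- is exactly what forces the cross terms to vanish. Concretely, the $\mathcal{B}$-columns map into $\mathscr{L}' - A_\phi = \mathscr{L}_-$ and are annihilated against $\mathcal{C}$ and $\mathcal{D}$, while the $\mathcal{C}$- and $\mathcal{D}$-columns reproduce $\mathscr{L}' + A_\phi$, the couplings $\sqrt{2}\,A_S$ and $\sqrt{2}\,A_S^T$, and the block $\mathscr{L}_S$. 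This yields
\[
M^T \mathscr{L} M = \begin{bmatrix} \mathscr{L}_- & \o \\ \o & \widetilde{\mathscr{L}_+} \end{bmatrix}.
\]

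Once this is in hand the conclusion is immediate. Because $M$ is orthogonal, conjugation by $M$ commutes with the matrix exponential, so
\[
M^T U_\mathscr{L}(t) M = M^{-1} \exp\!\ob{i t \mathscr{L}} M = \exp\!\ob{i t\, M^{-1} \mathscr{L} M} = \exp\!\left( i t \begin{bmatrix} \mathscr{L}_- & \o \\ \o & \widetilde{\mathscr{L}_+} \end{bmatrix} \right),
\]
and since the exponential of a block-diagonal matrix is block diagonal, the diagonal blocks are precisely $U_{\mathscr{L}_-}(t)$ and $U_{\widetilde{\mathscr{L}_+}}(t)$, as claimed.

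I expect the only genuine obstacle to be keeping the block bookkeeping straight in the computation of $M^T \mathscr{L} M$; everything else (orthogonality of $M$, and commuting the exponential with the similarity) is routine. A conceptually cleaner variant that sidesteps the explicit product is to invoke Lemma \ref{L1} directly: the columns of the $\mathcal{B}$-block span the $[c~{-}c~\o]^T$ space and the columns of the $\mathcal{C}$- and $\mathcal{D}$-blocks span the $[a~a~b]^T$ space, and these two orthogonal, complementary subspaces are $\mathscr{L}$-invariant. Invariance already forces the block-diagonal shape, so it only remains to identify the restriction of $\mathscr{L}$ to each subspace in the relevant orthonormal basis --- the first being $\mathscr{L}_-$ and the second being the symmetrized matrix $\widetilde{\mathscr{L}_+}$, where the factor $\sqrt{2}$ arises exactly as in the rescaling $\widetilde{\mathscr{L}_+} = K^{-1}\mathscr{L}_+ K$ of Lemma \ref{L1}.
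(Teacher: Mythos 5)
Your proof is correct, and it is worth noting where it departs from the paper's. The paper treats the two blocks asymmetrically and works directly at the level of transition matrices: for the $\mathcal{B}$-block it invokes Lemma \ref{L2} (and hence the spectral structure from Lemma \ref{L1}) to compute the entries $\frac{1}{2}\ob{\e_v-\e_{\phi(v)}}^T U_\mathscr{L}(t)\ob{\e_u-\e_{\phi(u)}}$ and identify them with the entries of $U_{\mathscr{L}_-}(t)$, while for the $\mathcal{C}$- and $\mathcal{D}$-columns it verifies the intertwining relation $\mathscr{L}N = N\widetilde{\mathscr{L}}_+$, deduces $U_\mathscr{L}(t)N = N U_{\widetilde{\mathscr{L}_+}}(t)$, and concludes since the columns of $N$ span the orthogonal complement of $\mathrm{span}(\mathcal{B})$. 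You instead establish the single similarity $M^T\mathscr{L}M = \mathrm{diag}\ob{\mathscr{L}_-,\ \widetilde{\mathscr{L}_+}}$ by one block multiplication --- which checks out: the $\mathcal{B}$-columns produce $\mathscr{L}'-A_\phi$ with all cross terms cancelling, and the $\mathcal{C},\mathcal{D}$-columns reproduce $\mathscr{L}'+A_\phi$, $\sqrt{2}A_S$, $\sqrt{2}A_S^T$ and $\mathscr{L}_S$ --- and then push the conjugation through the exponential using orthogonality of $M$. In effect you apply to both blocks the same algebraic intertwining trick that the paper uses only for the second block, which makes your argument uniform and more self-contained: it needs neither Lemma \ref{L2} nor any spectral decomposition, only the block form of $\mathscr{L}$ and the identity $M^{-1}\exp\ob{itX}M=\exp\ob{itM^{-1}XM}$, and it would survive verbatim for any matrix function of $\mathscr{L}$, not just the exponential. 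What the paper's route buys in exchange is reuse and interpretation: Lemma \ref{L2} has already been proved and is wanted elsewhere, and running the proof through it yields the entrywise pair-state transition amplitudes at no extra cost. Your closing observation --- that $\mathrm{span}(\mathcal{B})$ and $\mathrm{span}(\mathcal{C}\cup\mathcal{D})$ are exactly the two complementary $\mathscr{L}$-invariant eigenvector subspaces of Lemma \ref{L1}, with the factor $\sqrt{2}$ arising from the rescaling $\widetilde{\mathscr{L}_+}=K^{-1}\mathscr{L}_+K$ --- is precisely the conceptual content shared by both proofs.
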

\begin{proof}
Let $\e_u$ and $\check{\e}_u$ be the characteristic vectors of a vertex $u$ in $G$ and $G',$ respectively. Since $U_\mathscr{L}(t)$ is a polynomial in $\mathscr{L},$ the permutation matrix $P$ associated with the involution $\phi$ commutes with $U_\mathscr{L}(t).$ Since $\ob{\e_v-\e_{\phi(v)}}=\ob{I-P}\e_v$, the following holds for every $u,v\in V\ob{G'}$
\begin{eqnarray*}
 \frac{1}{2}\ob{\e_v-\e_{\phi(v)}}^T U_\mathscr{L}(t)\ob{\e_u-\e_{\phi(u)}} 
 %& = & \frac{1}{2}\e_v^T (I-P) U_\mathscr{L}(t) (I-P)\e_u\\
 &=& \e_v^TU_\mathscr{L}(t)\ob{\e_u-\e_{\phi(u)}}.
\end{eqnarray*}
It follows from Lemma \ref{L2} that
\begin{eqnarray*}
  \e_v^TU_\mathscr{L}(t)\ob{\e_u-\e_{\phi(u)}} &=& \e_v^T(I-P)\sum_{\mu_r \in \sigma(\mathscr{L}_-)}e^{it\mu_r}~F_r\e_u\\
  &=& \frac{1}{2}\sum_{\mu_r \in \sigma(\mathscr{L}_-)}e^{it\mu_r}~ \begin{bmatrix}
      \check{\e}_v^T& -\check{\e}_v^T& \o
  \end{bmatrix} \begin{bmatrix}
   F_r'\\
   -F_r'\\
   \o
  \end{bmatrix}\check{\e}_u\\
%  &=& \check{\e}_v^T\sum_{\mu_r \in \sigma(\mathscr{L}_-)}e^{it\mu_r}~ F_r' \check{\e}_u\\
  &=& \check{\e}_v^T U_{\mathscr{L}_-}(t) \check{\e}_u.
\end{eqnarray*}
Since $W = \text{span}(\mathcal{B})$ is an invariant subspace of $U_{\mathscr{L}}(t)$, the matrix representation of the restriction operator $U_{\mathscr{L}}(t)|_W$ relative to the basis $\mathcal{B}$ is the transition matrix $U_{\mathscr{L}_-}(t)$. Let $N$ be the matrix whose columns are the vectors in $\mathcal{C}$ and $\mathcal{D}$, respectively. Then
\[
N = \begin{bmatrix}
\frac{1}{\sqrt{2}} I_{|V(G')|} & \mathbf{0} \\
\frac{1}{\sqrt{2}} I_{|V(G')|} & \mathbf{0} \\
\mathbf{0} & I_{|S|}
\end{bmatrix}.
\]
It follows that $\mathscr{L}N=N\widetilde{\mathscr{L}}_+$, and therefore $U_\mathscr{L}(t)N=NU_{\widetilde{\mathscr{L}_+}}(t).$ Since the columns of $N$ span the orthogonal complement of $W$, we have the desired conclusion.
\end{proof}
The following observations are immediate from Theorem \ref{T2}.
\begin{cor}\label{C1}
Suppose the conditions of Theorem \ref{T2} hold. Then
\begin{enumerate}
\item Strong cospectrality (respectively, PST, PGST) relative to 
\[\begin{bmatrix}
    \mathscr{L}_- & \o\\ \o & \widetilde{\mathscr{L}}_+
\end{bmatrix}\]
occurs between two states $\x$ and $\y$ if and only if strong cospectrality (respectively, PST, PGST) relative to the $q$-Laplacian matrix of $G$ occurs between $M\x$ and $M\y.$
\item Periodicity (respectively, $C$-sedentariness) relative to \[\begin{bmatrix}
    \mathscr{L}_- & \o\\ \o & \widetilde{\mathscr{L}}_+
\end{bmatrix}\]
occurs from a state $\x$ if and only if periodicity (respectively, $C$-sedentariness) relative to the $q$-Laplacian matrix of $G$ occurs from $M\x$.
\end{enumerate}
\end{cor}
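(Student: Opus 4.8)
The plan is to deduce both statements directly from the similarity established in Theorem~\ref{T2}. The key observation is that the matrix $M$, whose columns form the orthonormal basis $\mathcal{B}\cup\mathcal{C}\cup\mathcal{D}$, is an orthogonal matrix (the three families are mutually orthogonal unit vectors spanning $\mathbb{R}^{2n+|S|}$), so $M^{-1}=M^T$. Writing $\mathscr{L}_\oplus:=\begin{bmatrix}\mathscr{L}_-&\o\\\o&\widetilde{\mathscr{L}}_+\end{bmatrix}$, Theorem~\ref{T2} gives $M^TU_\mathscr{L}(t)M=U_{\mathscr{L}_\oplus}(t)$, equivalently $U_\mathscr{L}(t)=MU_{\mathscr{L}_\oplus}(t)M^T$. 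Thus conjugation by $M$ is a unitary equivalence carrying the quantum walk on $\mathscr{L}_\oplus$ to the walk on $\mathscr{L}$, and every state-transfer notion in the corollary is preserved under such an equivalence. I would state this at the outset and then verify each property in turn.

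For part~(1), the strategy for PST is immediate: PST between $\x$ and $\y$ relative to $\mathscr{L}_\oplus$ at time $\tau$ means $U_{\mathscr{L}_\oplus}(\tau)\x=\gamma\y$ for some unit-modulus $\gamma$. Left-multiplying by $M$ and inserting $M^TM=I$ yields $MU_{\mathscr{L}_\oplus}(\tau)M^T(M\x)=\gamma(M\y)$, i.e. $U_\mathscr{L}(\tau)(M\x)=\gamma(M\y)$, which is exactly PST between $M\x$ and $M\y$ relative to $\mathscr{L}$; the converse is identical since $M$ is invertible. For PGST one applies the same manipulation to the defining limit, using that $M$ is a fixed bounded (indeed isometric) operator so it commutes with $\lim_{k\to\infty}$ and preserves the unit-modulus condition on $\gamma$. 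For strong cospectrality I would use the spectral projections: if $F_j$ are the projections of $\mathscr{L}_\oplus$, then $MF_jM^T$ are the projections of $\mathscr{L}$ (since $M\mathscr{L}_\oplus M^T=\mathscr{L}$), and the relation $F_j\x=\pm F_j\y$ transforms into $(MF_jM^T)(M\x)=\pm(MF_jM^T)(M\y)$, preserving the eigenvalue support and the $\pm$ signs.

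Part~(2) follows by specialization. Periodicity of $\x$ is PST from $\x$ to itself, so it is the case $\y=\x$ of part~(1); alternatively, $\x$ is periodic relative to $\mathscr{L}_\oplus$ iff $U_{\mathscr{L}_\oplus}(\tau)\x=\gamma\x$, which conjugates to $U_\mathscr{L}(\tau)(M\x)=\gamma(M\x)$. For $C$-sedentariness I would note that the quantity in \eqref{E2} is invariant under the orthogonal conjugation: since $M^T$ is orthogonal, $(M\x)^TU_\mathscr{L}(t)(M\x)=\x^TM^TMU_{\mathscr{L}_\oplus}(t)M^TM\x=\x^TU_{\mathscr{L}_\oplus}(t)\x$ for every $t$, so the two infima over $t>0$ coincide exactly, giving equivalence of $C$-sedentariness (and of sharp $C$-sedentariness).

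The main obstacle is essentially bookkeeping rather than conceptual: one must confirm at the start that $M$ is genuinely orthogonal so that $M^T=M^{-1}$ and the conjugation is a bona fide unitary equivalence, and then be careful that each invariant---the unit-modulus scalar $\gamma$, the eigenvalue support, the signed projection relations, and the sedentariness infimum---is transported faithfully. Because all four notions are defined purely through $U_\mathscr{L}(t)$, its spectral projections, or bilinear forms built from them, and because $M$ intertwines the two walks isometrically, each property is preserved, and the corollary reduces to routine verification once the intertwining relation $U_\mathscr{L}(t)=MU_{\mathscr{L}_\oplus}(t)M^T$ is in hand.
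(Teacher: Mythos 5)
Your proposal is correct and matches the paper's intent exactly: the paper states Corollary~\ref{C1} as ``immediate from Theorem~\ref{T2},'' and the justification it has in mind is precisely your argument --- $M$ is orthogonal, so $U_\mathscr{L}(t)=MU_{\mathscr{L}_\oplus}(t)M^T$ is a unitary equivalence transporting PST, PGST, strong cospectrality (via the conjugated spectral projections $MF_jM^T$), periodicity, and the sedentariness infimum between the two walks. Your fleshed-out verification of each invariant is the routine bookkeeping the paper leaves to the reader.
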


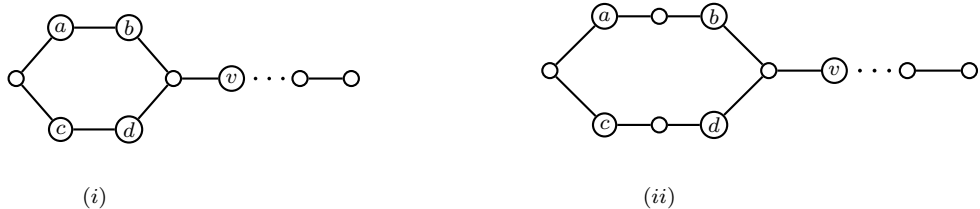
\begin{figure}
\centering

\begin{minipage}[t]{0.47\textwidth}
\centering

\begin{tikzpicture}[scale=0.45]
\node[circle,thick,draw,inner sep=2pt] (A) at (-1.3,0) {};
\node[circle,thick,draw,inner sep=1.3pt] (B) at (0,1.5) {\scriptsize$a$};
\node[circle,thick,draw,inner sep=1pt] (C) at (2,1.5) {\scriptsize$b$};
\node[circle,thick,draw,inner sep=1.3pt] (D) at (0,-1.5) {\scriptsize$c$};
\node[circle,thick,draw,inner sep=1pt] (E) at (2,-1.5) {\scriptsize$d$};
\node[circle,thick,draw,inner sep=2pt] (F) at (3.3,0) {};
\node[circle,thick,draw,inner sep=1.3pt] (G) at (5,0) {\scriptsize$v$};
%\node[circle,draw,inner sep=2pt] (H) at (6.5,0) {};
\node[circle,thick,draw,inner sep=2pt] (I) at (7,0) {};
\node[circle,thick,draw,inner sep=2pt] (J) at (8.5,0) {};
\node at (1,-3.5) {\scriptsize$(i)$};

\draw[thick] (A) -- (B);
\draw[thick] (B) -- (C);
\draw[thick] (A) -- (D);
\draw[thick] (D) -- (E);
\draw[thick] (F) -- (E);
\draw[thick] (C) -- (F);
\draw[thick] (F) -- (G);
%\draw (G) -- (H);
%\draw[thick,dotted] (G) -- (I);
\draw[thick] (I) -- (J);
\foreach \x in {5.7,6.1,6.5} {\fill (\x,0) circle (1.5pt); }

\end{tikzpicture}
\end{minipage}
\begin{minipage}[t]{0.46\textwidth}
\centering

\begin{tikzpicture}[scale=0.48]
\node[circle,thick,draw,inner sep=2pt] (A) at (-3,0) {};
\node[circle,thick,draw,inner sep=2pt] (B) at (0,1.5) {};
\node[circle,thick,draw,inner sep=1pt] (C) at (1.5,1.5) {\scriptsize$b$};
\node[circle,thick,draw,inner sep=2pt] (D) at (0,-1.5) {};
\node[circle,thick,draw,inner sep=1pt] (E) at (1.5,-1.5) {\scriptsize$d$};
\node[circle,thick,draw,inner sep=2pt] (F) at (3,0) {};
\node[circle,thick,draw,inner sep=1.3pt] (G) at (4.8,0) {\scriptsize$v$};
\node[circle,thick,draw,inner sep=2pt] (I) at (6.8,0) {};
\node[circle,thick,draw,inner sep=2pt] (J) at (8.5,0) {};
\node[circle,thick,draw,inner sep=1.3pt] (K) at (-1.5,1.5) {\scriptsize$a$};
\node[circle,thick,draw,inner sep=1.3pt] (L) at (-1.5,-1.5){\scriptsize$c$};
\node at (0,-3.5) {\scriptsize$(ii)$};

\draw[thick] (A) -- (K);
\draw[thick] (B) -- (C);
\draw[thick] (A) -- (L);
\draw[thick] (K) -- (B);
\draw[thick] (D) -- (L);
\draw[thick] (D) -- (E);
\draw[thick] (F) -- (E);
\draw[thick] (C) -- (F);
\draw[thick] (F) -- (G);
\draw[thick] (I) -- (J);
\foreach \x in {5.5,5.9,6.3} {\fill (\x,0) circle (1.5pt); }

\end{tikzpicture}
\end{minipage}
\caption{Pair state transfer on cycles with tail.}
\label{f2}
\end{figure}
The graph $G$ in Figure \ref{f2}(i) is obtained by attaching a path of arbitrary length to a fixed vertex of the cycle $C_6$ on six vertices. The graph $G$ admits the involution $\phi=(a~~c)(b~~d)$. The $q$-Laplacian matrix of the half graph induced by $\phi$, including the original degrees of the vertices is given by
\[\mathscr{L}'=\ob{1+q^2}I-q A\ob{P_2},\]
where $A\ob{P_2}$ denotes the adjacency matrix of the path $P_2$. Since $A_{\phi}=\o$, it follows that $\mathscr{L}_{-}=\mathscr{L}'$. As PST relative to $\mathscr{L}_{-}$ occurs at time $\frac{\pi}{2q}$ between $\e_a$ and $\e_b$, it follows from Corollary \ref{C1} that $G$ exhibits PST relative to its $q$-Laplacian matrix between the states $\frac{1}{\sqrt{2}}\ob{\e_a-\e_c}$ and $\frac{1}{\sqrt{2}}\ob{\e_b-\e_d}$ at the same time. A similar observation holds for the graph obtained by attaching a path to a fixed vertex of the cycle $C_8$ as shown in Figure \ref{f2}$(ii)$, where pair PST relative to the $q$-Laplacian matrix occurs at time $\frac{\pi}{q\sqrt{2}}$ between the states $\frac{1}{\sqrt{2}}\left( \mathbf{e}_a - \mathbf{e}_c \right)$ and $\frac{1}{\sqrt{2}}\left( \mathbf{e}_b - \mathbf{e}_d \right)$. 
\begin{cor}
There are infinitely many unicyclic graphs of maximum degree three that exhibit $q$-Laplacian pair PST.
\end{cor}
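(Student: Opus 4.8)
The plan is to exhibit an explicit infinite family realizing the claim, namely the ``cycle with a tail'' graphs already introduced in Figure \ref{f2}. For each integer $\ell \ge 1$, let $G_\ell$ be the graph obtained by attaching a path on $\ell$ vertices to a single vertex of the cycle $C_6$ as in Figure \ref{f2}$(i)$, equipped with the involution $\phi=(a~c)(b~d)$ that reflects the hexagon through the axis joining its two on-axis vertices while fixing the attachment vertex and every vertex of the tail. Since distinct values of $\ell$ produce graphs on $6+\ell$ vertices, the members of $\cb{G_\ell}_{\ell\ge 1}$ are pairwise non-isomorphic; hence it suffices to prove that each $G_\ell$ is unicyclic of maximum degree three and exhibits $q$-Laplacian pair PST.

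The structural assertions are immediate. Attaching a path to one vertex creates no new cycle, so $G_\ell$ retains exactly the single cycle $C_6$ and is unicyclic. The attachment vertex acquires degree three (its two cycle-neighbours together with the first tail vertex), whereas every remaining vertex has degree at most two, so $\Delta\ob{G_\ell}=3$. For the state transfer the decisive observation is that the half graph $G'$ induced by $\phi$ consists only of the two chosen non-fixed vertices $a$ and $b$, which span a single edge, i.e.\ $G'=P_2$; the entire tail belongs to the fixed set $S$ and therefore feeds only into the blocks $\mathscr{L}_S$ and $A_S$, never into $\mathscr{L}_-$. As no edge of $G_\ell$ crosses the involution, we have $A_\phi=\o$, so that $\mathscr{L}_-=\mathscr{L}'=\ob{1+q^2}I-qA\ob{P_2}$ holds independently of $\ell$.

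It then remains to verify vertex PST in the minus-sector and lift it. The matrix $\mathscr{L}_-$ differs from $-qA\ob{P_2}$ only by the scalar shift $\ob{1+q^2}I$, which contributes a global phase, so $U_{\mathscr{L}_-}(t)$ carries $\e_a$ to a unimodular multiple of $\e_b$ at $t=\frac{\pi}{2q}$; this is the elementary PST of $P_2$ and is valid for every nonzero $q$. Invoking Corollary \ref{C1}(1), this vertex PST in the $\mathscr{L}_-$-block lifts through the change-of-basis matrix $M$ of Theorem \ref{T2} to pair PST in $G_\ell$ relative to $\mathscr{L}$ between the $M$-images $\frac{1}{\sqrt{2}}\ob{\e_a-\e_c}$ and $\frac{1}{\sqrt{2}}\ob{\e_b-\e_d}$ at the same time $\frac{\pi}{2q}$. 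Since this argument is uniform in $\ell$, the whole family $\cb{G_\ell}$ witnesses the stated conclusion.

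The single point demanding care, which I would flag as the main obstacle, is confirming that lengthening the tail neither disturbs the involution nor leaks into the antisymmetric sector governing the pair PST. Because the tail is attached at a vertex lying on the reflection axis, $\phi$ extends to it by acting as the identity, so $A_\phi$ remains zero and $\mathscr{L}_-$ is genuinely tail-independent. This decoupling of the transferred pair state from the growing tail is exactly what lets a single $P_2$-computation serve the entire infinite family, and one should verify it explicitly rather than treat it as self-evident.
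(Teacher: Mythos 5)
Your proposal is correct and takes essentially the same route as the paper: the paper justifies this corollary precisely via the $C_6$-with-tail family of Figure \ref{f2}$(i)$, noting that the half graph is $P_2$ with $A_\phi=\o$, so $\mathscr{L}_-=\ob{1+q^2}I-qA\ob{P_2}$ admits vertex PST at time $\frac{\pi}{2q}$, which Corollary \ref{C1} lifts to pair PST between $\frac{1}{\sqrt{2}}\ob{\e_a-\e_c}$ and $\frac{1}{\sqrt{2}}\ob{\e_b-\e_d}$ for a tail of arbitrary length. Your extra verifications (pairwise non-isomorphism via vertex counts, the maximum-degree-three check, and the tail-independence of $\mathscr{L}_-$) simply make explicit what the paper leaves implicit.
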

In Figure \ref{f2}, if all vertices of $C_6$ (respectively, $C_8$) are connected to the vertex $v$, then it follows from Corollary \ref{C1} that the resulting graph exhibits pair PST with respect to its $q$-Laplacian matrix between $\frac{1}{\sqrt{2}}\left( \mathbf{e}_a - \mathbf{e}_c \right)$ and $\frac{1}{\sqrt{2}}\left( \mathbf{e}_b - \mathbf{e}_d \right)$. This fact is also observed in \cite[Theorem 1]{pal9} for both Laplacian and signless Laplacian matrices.

\section{Edge perturbed graphs}\label{sec3}
We investigate state transfer in special classes of graphs with respect to the $q$-Laplacian matrix, focusing in particular on the Laplacian and signless Laplacian cases. Although the complete graph $K_n$ does not exhibit pair PST---since every pair state remains fixed---removing an edge from $K_n$ induces pair PST relative to the Laplacian matrix, as shown in \cite[Corollary 5.4]{chen}. Moreover, it can be observed in \cite[Theorem 4]{pal9} that deleting a matching of size at least two from $K_n$ leads to pair PST with respect to the adjacency, Laplacian, and signless Laplacian matrices. The Corollary \ref{C1} further supports this result for the Laplacian and signless Laplacian cases. 

\subsection{Complete bipartite graphs}
 It is observed in \cite[Corollary 10.5]{god25} that a complete bipartite graph $K_{m,n}$ admits Laplacian pair PST if and only if either $m=n=2$ or $(m,n) \in \{(2,4k), (4k,2)\}$ for any integer $k \geq 1$.
 We investigate the existence of $q$-Laplacian pair PST in $K_{m,n}$ under certain edge perturbations. Let $M_k$ be a matching of size $k \geq 2$ in a complete bipartite graph $K_{m,n}$, with $\{a,c\}$ and $\{b,d\}$ among its edges, where $a,b$ belong to the same partite set. Consider the involution $\phi = (a~~b)(c~~d)$ on the graph $K_{m,n} - M_k$, obtained by removing all edges in $M_k$. When $m = n$, the matrix $\mathscr{L}_{-}$ is given by
\[
\mathscr{L}_{-} = \left( 1 + (m-2)q^2 \right) I - q A(P_2).
\]
Since PST with respect to $\mathscr{L}_{-}$ occurs at time $\frac{\pi}{2q}$, Corollary \ref{C1} implies that pair PST relative to the $q$-Laplacian matrix of $K_{m,m} - M_k$ occurs between $\frac{1}{\sqrt{2}} \left( \mathbf{e}_a - \mathbf{e}_b \right)$ and $\frac{1}{\sqrt{2}} \left( \mathbf{e}_c - \mathbf{e}_d \right)$ at the same time. When $m > n$, we add a set of edges $E$ within the larger partite set containing $a$ and $b$, connecting each of the vertices $a$ and $b$ to $ m-n$ additional vertices, chosen to preserve the involution $\phi$. By a similar argument, pair PST relative to the $q$-Laplacian matrix of the graph $K_{m,n} - M_k + E$ occurs at time $\frac{\pi}{2q}$ between the same pair states.

\begin{thm}\label{T3}
Let $M_k$ be a matching of size $k \geq 2$ in a complete bipartite graph $K_{m,n}$, with $\{a,c\}$ and $\{b,d\}$ among its edges, where $a$ and $b$ belong to the same partite set. Then, with respect to the $q$-Laplacian matrix, there is pair PST between the states $\frac{1}{\sqrt{2}}(\mathbf{e}_a - \mathbf{e}_b)$ and $\frac{1}{\sqrt{2}}(\mathbf{e}_c - \mathbf{e}_d)$ at time $\frac{\pi}{2q}$ in the following cases:
\begin{enumerate}
    \item When $m = n$, the graph $K_{m,m} - M_k$, obtained by removing the edges in $M_k$, exhibits pair PST.
    \item When $m > n$, let $E$ be a set of edges added within the larger partite set containing $a$ and $b$, connecting both $a$ and $b$ to $m - n$ additional vertices in such a way that the involution $(a~~b)(c~~d)$ is preserved. Then the graph $K_{m,n} - M_k + E$ exhibits pair PST.
\end{enumerate}
\end{thm}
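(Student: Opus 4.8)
The plan is to deduce the asserted pair PST in the perturbed graph $G$ from an ordinary vertex PST in the two-vertex half graph, using the transfer principle of Corollary \ref{C1}. First I would fix $\phi = (a~~b)(c~~d)$ and confirm that it is a genuine involution of $G$ in each case. Since $\phi$ fixes every vertex outside $\{a,b,c,d\}$, the only potential edges it moves are $\{a,c\}$ and $\{b,d\}$, which it interchanges; as both belong to the deleted matching $M_k$, the removed edge set is $\phi$-invariant, while the remaining $k-2$ matching edges avoid $\{a,b,c,d\}$ entirely and are therefore fixed. In case (2) the same reasoning applies once one checks that the added set $E$ is built symmetrically under $\phi$, i.e. whenever $E$ joins $a$ to a fixed vertex $z$ it also joins $b=\phi(a)$ to $z$. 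With $\phi$ confirmed as an involution (and no potentials, so $\Delta'=\o$), the block decomposition of Lemma \ref{L1} and the transition-matrix identity of Theorem \ref{T2} become available.

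Next I would identify the half graph and compute $\mathscr{L}_-$. The moved orbits are $\{a,b\}$ and $\{c,d\}$, so I may take $G'$ on the representatives $a$ and $c$, with $S$ consisting of all fixed vertices. Because $\{a,c\}\in M_k$ is deleted, $a$ and $c$ are non-adjacent in $G'$ and the off-diagonal of $\mathscr{L}_-$ is supplied entirely by the surviving cross-involution edges $\{a,\phi(c)\}=\{a,d\}$ and $\{c,\phi(a)\}=\{c,b\}$ through the block $A_\phi$; up to the immaterial sign coming from the choice of representatives, this yields the off-diagonal $-q$ recorded before the statement. It then remains to read off the diagonal, which is $\ob{1-q^2}+q^2\deg(\cdot)$. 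In case (1) both $a$ and $c$ have degree $m-1$ in $K_{m,m}-M_k$, giving diagonal entry $1+(m-2)q^2$. In case (2), with $a,b$ in the larger part, their degree $n-1$ is raised to $m-1$ by the $m-n$ edges of $E$ incident to each, matching the degree $m-1$ of $c$. The key point is that these new edges join $a$ (respectively $b$) only to fixed vertices, so they feed solely into the diagonal of $\mathscr{L}'$ and into $A_S$, leaving $A_\phi$ untouched; hence both cases produce exactly the same matrix $\mathscr{L}_- = \ob{1+(m-2)q^2}\, I - q\,A(P_2)$.

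Finally I would verify vertex PST for this $2\times 2$ matrix and transport it. Writing $\delta = 1+(m-2)q^2$, the exponential $U_{\mathscr{L}_-}(t)=e^{it\delta}\exp\ob{-itq\,A(P_2)}$ has diagonal $e^{it\delta}\cos(qt)$ and off-diagonal of modulus $|\sin(qt)|$, so at $t=\frac{\pi}{2q}$ the diagonal vanishes and the off-diagonal has unit modulus; thus PST occurs between the vertex states $\check{\e}_a$ and $\check{\e}_c$ of $G'$ relative to $\mathscr{L}_-$. Viewing these as states supported on the $\mathscr{L}_-$-block of $\begin{bmatrix}\mathscr{L}_- & \o\\ \o & \widetilde{\mathscr{L}_+}\end{bmatrix}$ and invoking Corollary \ref{C1}(1), their images $M\check{\e}_a=\frac{1}{\sqrt{2}}\ob{\e_a-\e_b}$ and $M\check{\e}_c=\frac{1}{\sqrt{2}}\ob{\e_c-\e_d}$ exhibit pair PST relative to the $q$-Laplacian matrix of $G$ at the same time $\frac{\pi}{2q}$. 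I expect the only delicate point to be the bookkeeping in the middle step: one must be certain that the degree-equalising edges $E$ enter $\mathscr{L}_-$ only through the diagonal and $A_S$, so that $\mathscr{L}_-$ is literally the same reduced matrix in both regimes; the rest of the matching and the full partite structure are inert for $\mathscr{L}_-$ precisely because they live inside $S$.
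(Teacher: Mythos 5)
Your proposal is correct and follows essentially the same route as the paper: the involution $\phi=(a~~b)(c~~d)$, identification of the half-graph matrix $\mathscr{L}_{-}=\ob{1+(m-2)q^2}I\pm qA(P_2)$, vertex PST at time $\frac{\pi}{2q}$, and transfer to pair PST via Corollary \ref{C1}. You merely supply details the paper leaves implicit (the $\phi$-invariance of $M_k$ and $E$, the degree bookkeeping, and the representative-dependent but immaterial sign of the off-diagonal), all of which are accurate.
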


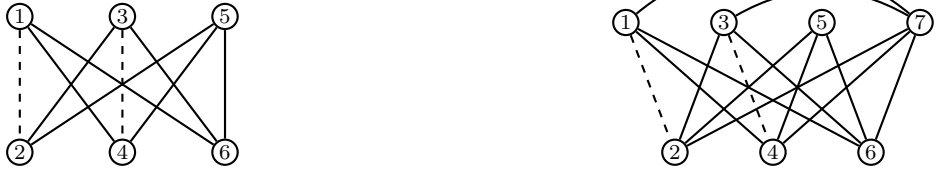
\begin{figure}
\centering

\begin{minipage}[t]{0.47\textwidth}
\centering

\begin{tikzpicture}[scale=0.45]
\node[circle,thick,draw,inner sep=1pt] (A) at (-3,0) {\scriptsize$1$};
\node[circle,thick,draw,inner sep=1pt] (B) at (0,0) {\scriptsize$3$};
\node[circle,thick,draw,inner sep=1pt] (C) at (3,0) {\scriptsize$5$};
\node[circle,thick,draw,inner sep=1pt] (D) at (-3,-4) {\scriptsize$2$};
\node[circle,thick,draw,inner sep=1pt] (E) at (0,-4) {\scriptsize$4$};
\node[circle,thick,draw,inner sep=1pt] (F) at (3,-4) {\scriptsize$6$};
%\node at (1,-3.5) {$(i)$};

 \draw[thick,dashed] (A) -- (D);
 \draw[thick] (A) -- (E);
 \draw[thick] (A) -- (F);
 \draw[thick] (B) -- (D);
 \draw[thick,dashed] (B) -- (E);
 \draw[thick] (B) -- (F);
 \draw[thick] (C) -- (D);
 \draw[thick] (C) -- (E);
 \draw[thick] (C) -- (F);

\end{tikzpicture}
\end{minipage}
\hfill
\begin{minipage}[t]{0.46\textwidth}
\centering

\begin{tikzpicture}[scale=0.43]
\node[circle,thick,draw,inner sep=1pt] (A) at (-3,0) {\scriptsize$1$};
\node[circle,thick,draw,inner sep=1pt] (B) at (0,0) {\scriptsize$3$};
\node[circle,thick,draw,inner sep=1pt] (C) at (3,0) {\scriptsize$5$};
\node[circle,thick,draw,inner sep=1pt] (D) at (6,0) {\scriptsize$7$};
\node[circle,thick,draw,inner sep=1pt] (E) at (-1.5,-4) {\scriptsize$2$};
\node[circle,thick,draw,inner sep=1pt] (F) at (1.5,-4) {\scriptsize$4$};
\node[circle,thick,draw,inner sep=1pt] (G) at (4.5,-4) {\scriptsize$6$};

%\node at (0,-3.5) {$(ii)$};

\draw[thick,dashed] (A) -- (E);
\draw[thick] (A) -- (F);
\draw[thick] (A) -- (G);
\draw[thick] (B) -- (E);
\draw[thick,dashed] (B) -- (F);
\draw[thick] (B) -- (G);
\draw[thick] (C) -- (E);
\draw[thick] (C) -- (F);
\draw[thick] (C) -- (G);
\draw[thick] (D) -- (E);
\draw[thick] (D) -- (F);
\draw[thick] (D) -- (G);
\draw[thick,bend left=40] (A) to (D);
\draw[thick,bend left=30] (B) to (D);

\end{tikzpicture}
\end{minipage}
\caption{Edge-perturbed complete bipartite graphs: $(i)~ K_{3,3}-M_k,~ (ii)~K_{4,3}-M_k+E$.}
\label{f3}
\end{figure}
In \cite{pal9}, various classes of graphs are identified that exhibit pair PST with respect to the adjacency, Laplacian, and signless Laplacian matrices between the same pair states at the same time. It follows from Theorem \ref{T3} that the graphs depicted in Figure \ref{f3} exhibit pair PST with respect to the $q$-Laplacian matrix between $\frac{1}{\sqrt{2}}\left( \mathbf{e}_1 - \mathbf{e}_3 \right)$ and $\frac{1}{\sqrt{2}}\left( \mathbf{e}_2 - \mathbf{e}_4 \right)$ at time $\frac{\pi}{2q}$. Moreover, by \cite[Theorem 2]{pal7}, the graphs described in Theorem \ref{T3} also exhibit pair PST with respect to the adjacency matrix. Consequently, Theorem \ref{T3} characterizes a family of non-regular graphs that exhibit pair PST with respect to the adjacency, Laplacian, and signless Laplacian matrices between the same pair states at the same time.

\subsection{Cycles}
Let $C_n$ denote the cycle on $n$ vertices with vertex set $\mathbb{Z}_n$, where vertices $i$ and $j$ are adjacent if and only if $ (i-j) \equiv \pm1 \pmod{n}$. In what follows, we explore whether edge-perturbed cycles, with or without potential, admit pair PST with respect to the $q$-Laplacian matrix. By Corollary~\ref{C1}, pair PST relative to the $q$-Laplacian matrix occurs in the cycles $C_4$, $C_6$, and $C_8$, as there is vertex PST relative to their respective $\mathscr{L}_{-}$ matrices. Since cycles are regular graphs, it follows from \cite{kim} that only these three cycles admit pair PST with respect to the $q$-Laplacian matrix. Here, we investigate state transfer in edge-perturbed cycles with or without potentials. Although $C_5$ does not admit pair PST, Corollary~\ref{C1} shows that $C_5$ equipped with a potential $1$ at vertices $1$ and $4$ yields Laplacian pair PST between $\frac{1}{\sqrt{2}}(\e_1 - \e_4)$ and $\frac{1}{\sqrt{2}}(\e_2 - \e_3)$ at time $\frac{\pi}{2}$. A single edge perturbation of a cycle $C_n$ amounts to a rank-one positive semidefinite update of the Laplacian and signless Laplacian matrices. Let $\zeta$ be defined as $\zeta = -1$ when referring to the Laplacian matrix, and $\zeta = 1$ when referring to the signless Laplacian matrix. We state below the normalized eigenvectors of $C_n$ as given in \cite[Lemma 6.4]{god25}.

\begin{lem}\label{L3}
The Laplacian and signless Laplacian eigenvalues of $C_n$ are \[\theta_j= 2+2\zeta\cos\ob{\frac{2j\pi}{n}},\] where $0 \leq j \leq \lfloor \frac{n}{2} \rfloor.$ The eigenvector corresponding to eigenvalue $\theta_0=2\ob{1+\zeta}$ is $v_0= \frac{1}{\sqrt{n}}[1,1,\ldots,1]^T,$ while the eigenvector for $\theta_{\frac{n}{2}}=2\ob{1-\zeta},$ whenever $n$ is even is $v_{\frac{n}{2}}=\frac{1}{\sqrt{n}}[1,-1,1,\ldots,1,-1]^T.$ For $1 \leq j < \frac{n}{2},$ the following are eigenvectors for $\theta_j:$
$$v_j=\sqrt{\frac{2}{n}}\tb{1 ~~\cos\ob{\frac{2j\pi}{n}} ~~ \cos\ob{\frac{4j\pi}{n}} ~~ \ldots~~ \cos\ob{\frac{2j(n-1)\pi}{n}}}^T $$
and $$v_{n-j}=\sqrt{\frac{2}{n}}\tb{0 ~~\sin\ob{\frac{2j\pi}{n}} ~~ \sin\ob{\frac{4j\pi}{n}} ~~ \ldots~~ \sin\ob{\frac{2j(n-1)\pi}{n}}}^T.$$ Moreover, $\cb{v_0,\ldots, v_{n-1}}$ is an orthonormal basis for $\mathbb{R}^n.$
\end{lem}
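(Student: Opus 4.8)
The plan is to reduce both matrices to a single one-parameter family. Since every vertex of $C_n$ has degree two, its degree matrix is $2I$, so the Laplacian and signless Laplacian are $L = 2I - A$ and $Q = 2I + A$, where $A = A(C_n)$ is the adjacency matrix. Both can be written uniformly as $2I + \zeta A$ with $\zeta \in \{-1, 1\}$, so it suffices to diagonalize $A$: any eigenvector of $A$ with eigenvalue $\lambda$ is an eigenvector of $2I + \zeta A$ with eigenvalue $2 + \zeta\lambda$, which will immediately yield $\theta_j = 2 + 2\zeta\cos\ob{\frac{2j\pi}{n}}$ once we know that the eigenvalues of $A$ are $2\cos\ob{\frac{2j\pi}{n}}$.

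Next I would determine the spectral decomposition of $A$. Because $C_n$ is circulant, $A = S + S^{-1}$ where $S$ is the cyclic shift on $\mathbb{Z}_n$, and its complex eigenvectors are the Fourier modes $[1, \omega^j, \ldots, \omega^{(n-1)j}]^T$ with $\omega = e^{2\pi i/n}$, having eigenvalues $\omega^j + \omega^{-j} = 2\cos\ob{\frac{2j\pi}{n}}$. Since $A$ is real and symmetric, the real eigenvectors claimed in the lemma are precisely the real and imaginary parts of these Fourier modes. The self-contained route I would actually write out is a direct verification: for the cosine vector $v_j$ with entries $\cos\ob{\frac{2jk\pi}{n}}$, the identity $\cos(\alpha - \beta) + \cos(\alpha + \beta) = 2\cos\alpha\cos\beta$ gives $(A v_j)_k = \cos\ob{\frac{2j(k-1)\pi}{n}} + \cos\ob{\frac{2j(k+1)\pi}{n}} = 2\cos\ob{\frac{2j\pi}{n}}(v_j)_k$, and the analogous sine identity handles $v_{n-j}$. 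This confirms $A v_j = 2\cos\ob{\frac{2j\pi}{n}} v_j$ and likewise for $v_{n-j}$.

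Then I would pin down the normalizing constants and isolate the degenerate indices. For $1 \leq j < \frac{n}{2}$ the constant $\sqrt{2/n}$ is fixed by $\|v_j\|^2 = \frac{2}{n}\sum_{k=0}^{n-1}\cos^2\ob{\frac{2jk\pi}{n}} = 1$, which follows from $\cos^2\theta = \frac{1+\cos 2\theta}{2}$ together with the discrete orthogonality relation $\sum_{k=0}^{n-1}\cos\ob{\frac{2\pi m k}{n}} = 0$ for $m \not\equiv 0 \pmod n$. For $j = 0$ the sine vector vanishes identically and only $v_0 = \frac{1}{\sqrt{n}}[1, \ldots, 1]^T$ survives, and for $j = \frac{n}{2}$ (when $n$ is even) the sine vector again vanishes while the cosine vector collapses to $\frac{1}{\sqrt{n}}[1, -1, \ldots, 1, -1]^T$; each has unit norm. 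These two indices carry the constant $\frac{1}{\sqrt n}$ rather than $\sqrt{2/n}$ precisely because their eigenvalues $2(1+\zeta)$ and $2(1-\zeta)$ are simple, whereas every $\theta_j$ with $1 \leq j < \frac{n}{2}$ has multiplicity two.

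Finally I would check orthogonality to conclude that $\cb{v_0, \ldots, v_{n-1}}$ is an orthonormal basis of $\mathbb{R}^n$. Eigenvectors attached to distinct eigenvalues are automatically orthogonal by symmetry of $A$, so the only pair needing attention is $v_j, v_{n-j}$ sharing a common eigenvalue, and $\langle v_j, v_{n-j}\rangle = \frac{1}{n}\sum_{k=0}^{n-1}\sin\ob{\frac{4jk\pi}{n}} = 0$ by the sine version of the discrete orthogonality relation. The main obstacle here is not difficulty but bookkeeping: keeping the degenerate indices $j = 0$ and $j = \frac{n}{2}$ cleanly separated from the generic range $1 \leq j < \frac{n}{2}$, and matching each normalizing constant to the correct eigenvalue multiplicity; the underlying trigonometric sums are entirely routine.
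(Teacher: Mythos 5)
Your proof is correct, but there is nothing in the paper to compare it against: the paper does not prove this lemma at all. It is imported verbatim as a known result, cited as \cite[Lemma 6.4]{god25}, and used as a black box in the subsequent analysis of edge-perturbed cycles. Your argument is therefore a self-contained replacement for that citation rather than an alternative to an internal proof. The route you take is the standard one, and every step checks out: writing both matrices as $2I+\zeta A$ with $\zeta\in\cb{-1,1}$ reduces everything to the adjacency spectrum of the circulant $A$; the sum-to-product identities give $Av_j = 2\cos\ob{\frac{2j\pi}{n}}v_j$ and likewise for the sine vectors; the discrete orthogonality relation $\sum_{k=0}^{n-1}\cos\ob{\frac{2\pi m k}{n}}=0$ for $m\not\equiv 0 \pmod n$ fixes the constants $\sqrt{2/n}$ (valid since $2j\not\equiv 0\pmod n$ exactly on the range $1\le j<\frac{n}{2}$); and your handling of the degenerate indices $j=0$ and $j=\frac{n}{2}$, where the sine vector vanishes and the eigenvalue is simple, matches the $\frac{1}{\sqrt n}$ normalization in the statement. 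For the basis claim, distinctness of $\cos\ob{\frac{2j\pi}{n}}$ for $0\le j\le \lfloor\frac{n}{2}\rfloor$ (injectivity of cosine on $[0,\pi]$) makes cross-eigenspace orthogonality automatic, the explicit sine sum disposes of the pair $v_j,v_{n-j}$, and the vector count is $n$, so the family is indeed an orthonormal basis of $\mathbb{R}^n$. The only cosmetic caveat is that your multiplicity-two remark presumes $n\ge 3$, which is implicit in $C_n$ being a cycle.
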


Let $b \in \mathbb{Z}_n\setminus \{0\}$ and $\rho \in \mathbb{R}^{+}$. If an edge of weight $\rho$ is added in $C_n$ between the vertices $0$ and $b$, then the Laplacian and signless Laplacian matrices of the resulting graph $C_n+\rho~\{0,b\}$ is given by $$M_{\rho}=M+\rho~\w\w^T, \quad \w=\ob{\e_0+\zeta\e_b} \text{ and } M\in\cb{L, Q}.$$
Since $\theta_j,$ where $1 \leq j < \frac{n}{2}$ has multiplicity $2$ as an eigenvalue of $C_n$, it follows from Theorem \ref{T1} that $\theta_j$ is also an eigenvalue for the edge perturbed graph $C_n+\rho~\{0,b\}.$ Next we find an eigenvector of $C_n+\rho~\{0,b\}$ corresponding to $\theta_j.$ If $c_1$ and $c_2$ are scalars, not both zero, then the vector $c_1 v_j + c_2 v_{n-j}$ is also an eigenvector of $M$ corresponding to the eigenvalue $\theta_j$. Then  
$$M_{\rho}\ob{c_1v_j+c_2v_{n-j}} =M\ob{c_1v_j+c_2v_{n-j}}+\rho\ob{c_1\w^Tv_j+c_2\w^Tv_{n-j}}\w,$$

where $\w^Tv_j=\sqrt{\frac{2}{n}}\ob{1+\zeta\cos\ob{\frac{2bj\pi}{n}}} \quad \text{and} \quad \w^Tv_{n-j}=\sqrt{\frac{2}{n}}~\zeta \sin \ob{\frac{2bj\pi}{n}}.$ An eigenvector of $M_{\rho}$ corresponding to $\theta_j$ is obtained as
\[\z_j=\begin{cases}
v_j+v_{n-j},& \text{whenever }\w^Tv_{n-j}= 0 =\w^Tv_j,\\
\ob{\w^Tv_{n-j}}v_j- \ob{\w^Tv_j}v_{n-j},& \text{otherwise.}
\end{cases}
\]
It follows that if $\w^Tv_{n-j}= 0 =\w^Tv_j$ and $k \in \mathbb{Z}_n,$ then
\begin{equation}\label{E5}
  \e_k^T \z_j= \dfrac{2}{\sqrt{n}} \cos{\left( \dfrac{2jk\pi}{n}-\dfrac{\pi}{4} \right)}. 
\end{equation}
In the remaining cases
\begin{equation}\label{E6}
\e_k^T\z_j=\begin{cases}
-\dfrac{4}{n} \cos{\left( \dfrac{bj\pi}{n} \right)} \sin{\left( \dfrac{(2k - b)j\pi}{n} \right)}, & \text{whenever } \zeta = 1, \\
\\
-\dfrac{4}{n} \sin{\left( \dfrac{bj\pi}{n} \right)} \cos{\left( \dfrac{(2k - b)j\pi}{n} \right)}, & \text{whenever } \zeta = -1.
\end{cases}
\end{equation}
 The following result provides a characterization of vertex PST in the edge-perturbed cycle for both Laplacian and signless Laplacian matrices.
\begin{lem}\label{L4}
Let $\rho \in \mathbb{Z}^+$, $b \in \mathbb{Z}_n\setminus \cb{0}$ and let $C_n$ be a cycle on $n \geq 7$ vertices. If the graph $C_n + \rho ~\{0, b\}$ exhibits vertex PST, then the following hold: 
\begin{enumerate}
  
  \item In the Laplacian case, both $\frac{b}{2} + \frac{n}{4}$ and $\frac{b}{2} + \frac{3n}{4}$ must be integers, and PST occurs only between these two vertices.

   \item In the signless Laplacian case, both $\frac{b}{2}$ and $\frac{n + b}{2}$ must be integers whenever $b \neq \frac{n}{2}$; and both $\frac{3n}{8}$ and $\frac{7n}{8}$ must be integers whenever $b = \frac{n}{2}.$ In either case, PST occurs only between these two vertices.
   
\end{enumerate}
\end{lem}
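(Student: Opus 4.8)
The plan is to extract the stated conditions from strong cospectrality together with the phase (time) condition for perfect state transfer, exploiting the reflection automorphism $R:k\mapsto b-k \pmod n$ of $C_n+\rho\{0,b\}$. Since $R$ swaps the endpoints $0$ and $b$ of the added edge, it commutes with $M_\rho=M+\rho\w\w^T$, and the vector $\w=\e_0+\zeta\e_b$ satisfies $R\w=\zeta\w$; thus $\w$ is antisymmetric under $R$ in the Laplacian case ($\zeta=-1$) and symmetric in the signless case ($\zeta=1$). Consequently the $M$-cyclic subspace generated by $\w$, which carries every eigenvector displaced by the rank-one perturbation, lies entirely in one $R$-parity class, while the preserved eigenvectors $\z_j$ described in \eqref{E5}--\eqref{E6} lie in the complementary class. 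First I would record this parity dichotomy, because it fixes, for each eigenvalue $\mu$ in the support, the sign $\epsilon_\mu\in\{\pm1\}$ in $F_\mu\e_u=\epsilon_\mu F_\mu\e_v$ without any knowledge of the perturbed eigenvalues.

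Next I would use that strong cospectrality is necessary for PST, as in \cite[Lemma 5.1(1)]{god25}. On the preserved eigenvalues this forces $|(\z_j)_u|=|(\z_j)_v|$ for every $j$ with $(\z_j)_u\neq 0$; substituting \eqref{E6} turns this into $\cos((2u-b)j\pi/n)=\pm\cos((2v-b)j\pi/n)$ in the Laplacian case and the analogous $\sin$ identity in the signless case, required to hold across all admissible frequencies $j$. Reading these equalities over the many available $j$ pins the relation between $u$ and $v$ to $2u-b\equiv\pm(2v-b)$ or $\pm(2v-b)+n \pmod{2n}$, that is $v\in\{b-u,\ u+\frac n2,\ b-u+\frac n2\}$. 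The parity dichotomy then selects the admissible target: the eigenvectors $x$ in the $\w$-cyclic block satisfy $x_{R(k)}=\zeta x_k$, so $|x_u|=|x_v|$ on that block forces $v=R(u)=b-u$ in the Laplacian case, whereas in the signless case the preserved eigenvectors are antisymmetric and hence vanish at every fixed point of $R$, so $\e_u$ is supported entirely on the symmetric block and $u,v$ must be the two fixed points $\frac b2$ and $\frac b2+\frac n2$ of $R$.

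Finally I would impose the phase condition $e^{i\tau\mu}=\gamma\epsilon_\mu$ for all $\mu$ in the support. Because the parity dichotomy makes $\epsilon_\mu$ constant on each block, this decouples into the requirement that all preserved eigenvalues in the support share one phase and all $\w$-driven eigenvalues share the opposite phase. As the preserved eigenvalues are the distinct numbers $\theta_j=2+2\zeta\cos(2j\pi/n)$, this can hold only if the preserved part of the support folds; in the Laplacian case it forces $2u-b\equiv \frac n2 \pmod n$, so that $(\z_j)_u$ is proportional to $\cos(j\pi/2)$ and survives only for even $j$ (the eigenvalues of $C_{n/2}$), giving $u=\frac b2+\frac n4$, $v=\frac b2+\frac{3n}4$; the requirement that these be genuine vertices is exactly $\frac b2+\frac n4,\ \frac b2+\frac{3n}4\in\mathbb{Z}$. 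The same bookkeeping on the symmetric block in the signless case yields integrality of $\frac b2$ and $\frac{n+b}2$ when $b\neq\frac n2$, while $b=\frac n2$ must be treated separately, since the chord then joins antipodal vertices, the extra antipodal automorphism $k\mapsto k+\frac n2$ appears, and the relevant pair moves to $\frac{3n}8$ and $\frac{7n}8$, whose integrality is the stated condition $8\mid n$.

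The main obstacle is that the eigenvalues created by the rank-one perturbation are roots of a secular equation depending on $\rho$ and are not available in closed form, so the phase/ratio condition cannot be verified on them directly. The device that circumvents this is precisely the $R$-parity dichotomy: it fixes $\epsilon_\mu$ on the whole perturbed block and supplies $|x_u|=|x_v|$ there by symmetry alone, so the only quantitative input required is the explicit preserved spectrum $\theta_j$, together with the fact that $M_\rho$ has integer entries (because $\rho\in\mathbb{Z}^+$), whence its eigenvalues are algebraic integers subject to the ratio condition for periodicity. I expect the delicate points to be eliminating the spurious candidates $u+\frac n2$ and $b-u+\frac n2$ uniformly for all $n\geq 7$, and carrying out the separate analysis of the symmetric sub-case $b=\frac n2$.
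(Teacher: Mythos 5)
Your structural setup is sound: the reflection $R\colon k\mapsto b-k\pmod n$ does commute with $M_\rho$, one has $R\w=\zeta\w$, the $\w$-cyclic block lies in the $\zeta$-eigenspace of $R$, and the preserved eigenvectors $\z_j$ (when $F_j\w\neq 0$) lie in the opposite one; your strong-cospectrality analysis of the preserved block also correctly yields the candidate set $v\in\cb{b-u,\ u+\tfrac n2,\ b-u+\tfrac n2}$. The first genuine gap is the elimination of the spurious candidates. In the Laplacian case $(\z_j)_k\propto\sin\ob{bj\pi/n}\cos\ob{(2k-b)j\pi/n}$, and since $\cos(\theta+j\pi)=(-1)^j\cos\theta$, the pair $\ob{u,\ u+\tfrac n2}$ satisfies $\mb{(\z_j)_u}=\mb{(\z_j)_{u+n/2}}$ for \emph{every} $j$: the preserved block is blind to this candidate. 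The antisymmetry $x_{b-k}=-x_k$ of the displaced eigenvectors relates entries at $k$ and $b-k$ but imposes no relation whatsoever between entries at $u$ and $u+\tfrac n2$, so your claim that parity ``forces $v=R(u)=b-u$'' is unsupported — it shows only that $v=b-u$ is \emph{consistent} with strong cospectrality on the displaced block, not that the alternatives are inconsistent. Since the displaced eigenvectors are roots of a $\rho$-dependent secular equation with no closed form (as you acknowledge), this elimination cannot be completed inside your framework.

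The second gap is that the ``folding'' of the preserved support under the phase condition (forcing $2u-b\equiv\tfrac n2\pmod n$) is asserted rather than derived, and the device that actually delivers it is the one you relegate to an afterthought: $\theta_1=2+2\zeta\cos\ob{2\pi/n}$ has algebraic degree $\phi(n)/2\geq 2$, hence is not an integer for $n\geq 7$, and by the periodicity criterion \cite[Theorem 4]{kirk4} a vertex of an integer matrix whose eigenvalue support contains such an eigenvalue cannot be periodic, hence cannot be an endpoint of PST. This is exactly the paper's proof, and it makes the whole strong-cospectrality and phase-synchronization machinery unnecessary: one computes from \eqref{E5} and \eqref{E6} precisely which vertices $k$ satisfy $\e_k^T\z_1=0$ (these are the listed vertices, and their labels must be integers for them to exist); every other vertex has $\theta_1$ in its support and is disqualified; since both endpoints of a PST pair must be periodic, PST can only occur between the two listed vertices. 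If you promote the integrality/ratio-condition observation from your closing remark to the main argument, your proof collapses to the paper's one-step argument; as written, the two steps that replace it cannot be closed.
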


\begin{proof}
Since $0 \leq \frac{(2k - b)\pi}{n} < 2\pi$, it follows from \eqref{E5} and \eqref{E6} that $\e_k^T \z_1 = 0$ under the following conditions: when $\zeta = 1$ and $b \neq \frac{n}{2},$ this holds if $k = \frac{b}{2}$ or $k = \frac{n + b}{2}$; and when $\zeta = 1$ and $b = \frac{n}{2},$ this holds if $k=\frac{3n}{8}$ or $k=\frac{7n}{8}.$ In the case when $\zeta = -1$, the term $\e_k^T \z_1=0$ if $k = \frac{b}{2} + \frac{n}{4}$ or $k = \frac{b}{2} + \frac{3n}{4}$. If $\e_k^T \z_1 \neq 0,$ then the eigenvalue $\theta_1= 2+2\zeta \cos \ob{\frac{2\pi}{n}}$ is in the support of the vertex $k.$ Note that $\cos\ob{\frac{2\pi}{n}}$ is an algebraic integer of degree $\frac{\phi(n)}{2},$ where $\phi(n)$ denote the Euler totient function. Therefore, the eigenvalue $\theta_1$ is not an integer whenever $n \geq 7.$ By \cite[Theorem 4]{kirk4}, if the graph $C_n + \rho~\{0, b\}$ exhibits vertex PST, then the following conditions must hold: in the signless Laplacian case (i.e., $\zeta = 1$), when $b \neq \frac{n}{2},$ both $\frac{b}{2}$ and $\frac{n + b}{2}$ must be integers and PST occurs only between them; and when $b = \frac{n}{2},$ both $\frac{3n}{8}$ and $\frac{7n}{8}$ must be integers and PST occurs only between them. In the Laplacian case (i.e., $\zeta = -1$), both $\frac{b}{2} + \frac{n}{4}$ and $\frac{b}{2} + \frac{3n}{4}$ must be integers and PST occurs between these two vertices.
\end{proof}
The next result establishes the non-existence of vertex PST in edge-perturbed cycles, except for a few cases, with respect to both Laplacian and signless Laplacian matrices.
\begin{thm}\label{T4}
Let $\rho \in \mathbb{Z}^+$, $b \in \mathbb{Z}_n\setminus \cb{0}$ and let $C_n$ be a cycle. The graph $C_n + \rho~(0,b)$
does not admit perfect vertex state transfer with respect to the Laplacian
for $n \geq 15$, and with respect to the signless Laplacian for $n \geq 9$.
\end{thm}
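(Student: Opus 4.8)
The plan is to build directly on Lemma~\ref{L4}, which already confines any vertex PST in $C_n+\rho\,(0,b)$ to one explicitly described pair of vertices $u,v$ (with $u=\tfrac{b}{2}+\tfrac{n}{4}$, $v=u+\tfrac{n}{2}$ in the Laplacian case, and $u=\tfrac{b}{2}$, $v=\tfrac{n+b}{2}$ in the signless case) together with parity constraints forcing $n$ and $b$ even. First I would fix such a candidate vertex $u$ and read off its eigenvalue support from \eqref{E5} and \eqref{E6}. The spectrum of $C_n+\rho\,(0,b)$ splits into the \emph{retained} eigenvalues $\theta_j=2+2\zeta\cos(2j\pi/n)$, whose eigenvectors $\z_j$ are orthogonal to $\w=\e_0+\zeta\e_b$, and the \emph{perturbed} eigenvalues produced by the rank-one update $\rho\w\w^T$, all of which live in the cyclic subspace generated by $\w$. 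Evaluating \eqref{E5}--\eqref{E6} at $u$ determines exactly which retained $\theta_j$ occur in $\Lambda_{\e_u}(\mathscr{L})$, while Theorem~\ref{T1} places each perturbed eigenvalue strictly between two consecutive retained ones.

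The second step is to invoke the standard necessary condition for vertex PST. Since the Laplacian and signless Laplacian of $C_n+\rho\,(0,b)$ are integer matrices (the cycle carries unit weights and $\rho\in\mathbb{Z}^+$), their eigenvalues are algebraic integers, and a vertex admitting PST is periodic; periodicity forces every eigenvalue in its support either to be a rational integer or to lie in a single real quadratic field with a common half-integer shape, so in particular all pairwise differences of support eigenvalues are commensurable and generate a fixed lattice $L$ on the real line. The supporting arithmetic input is the Niven-type fact that $\cos(2\pi/m)$ is rational only for $m\in\{1,2,3,4,6\}$ and quadratic only for $m\in\{5,8,10,12\}$, the quadratic values being distributed among the distinct fields $\mathbb{Q}(\sqrt5)$, $\mathbb{Q}(\sqrt2)$, and $\mathbb{Q}(\sqrt3)$.

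The third step is to produce, for $n$ at or beyond the stated thresholds, a support eigenvalue incompatible with step two. Two regimes arise. In the first, the index analysis of step one leaves a nonempty retained support that is ``nice'' (integers, or a single quadratic field as in the $n=16$, $b=2$ Laplacian configuration where $\{2-\sqrt2,2,2+\sqrt2\}$ appear); here I would show that some perturbed eigenvalue also lies in $\Lambda_{\e_u}(\mathscr{L})$, and that, being squeezed by Theorem~\ref{T1} into a gap of length $O(1/n)$ between consecutive retained eigenvalues, it cannot land on the coarse lattice $L$ forced by the retained values, so commensurability fails and PST is ruled out. In the second regime the retained support is empty or reduces to a single integer (as in the $n=10$, $b=2$ signless case), whence the support consists essentially of perturbed eigenvalues that are roots of a secular equation coupling at least three distinct retained values; these have minimal polynomial of degree at least three, so they cannot all lie in one quadratic field, again contradicting periodicity. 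The thresholds $n\ge 15$ (Laplacian) and $n\ge 9$ (signless) are precisely the first values from which one of these two alternatives is unavoidable for every admissible $b$, and the finitely many remaining $n$ above the cycles $C_4,C_6,C_8$ are dispatched by direct inspection of the support.

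The hard part will be step three carried out uniformly in $b$ and, above all, uniformly in the integer weight $\rho$. One must rule out that for some exceptional residue of $b$ and some $\rho\in\mathbb{Z}^+$ every perturbed eigenvalue entering the support is simultaneously tuned onto the lattice $L$ and into the ambient quadratic field of the retained eigenvalues. The plan to overcome this is to combine the interlacing bounds of Theorem~\ref{T1}, which trap each perturbed eigenvalue in a short interval between prescribed retained values, with the rigidity of $L$: in the integer regime only the finitely many integers of $[0,4]$ are admissible while interlacing forces strictly more perturbed eigenvalues into the support, and in the quadratic regime the lattice spacing is $O(1)$ whereas the trapping interval has length $O(1/n)$, so for $n$ past the threshold no integer choice of $\rho$ can seat the perturbed eigenvalue on $L$. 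Verifying that a perturbed eigenvalue always survives in $\Lambda_{\e_u}(\mathscr{L})$, and that the relevant factor of the secular equation genuinely mixes at least three retained values for every integer $\rho$, is the remaining technical core.
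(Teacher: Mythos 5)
Your starting point (Lemma \ref{L4} plus a periodicity obstruction) matches the paper's, but after that you take a different route, and as it stands it has genuine gaps concentrated exactly where you defer the ``technical core.'' Two load-bearing claims are unproven and, as stated, unsound. First, ``trapped by Theorem \ref{T1} in an interval of length $O(1/n)$, hence cannot land on the coarse lattice $L$'' is a non sequitur: a short interval can perfectly well contain a point of a lattice with $O(1)$ spacing, so you would have to control \emph{where} each trapping interval sits relative to $L$, uniformly in $\rho\in\mathbb{Z}^+$ and $b$. Second, ``the secular equation mixes at least three retained eigenvalues, hence its roots have minimal polynomial of degree at least three'' is not a theorem: a rank-one integer perturbation can have integer or quadratic eigenvalues no matter how many terms the secular function carries, and proving otherwise here, uniformly in $\rho$, is precisely the hard content. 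Third, you never show that any $\rho$-dependent eigenvalue actually lies in $\Lambda_{\e_u}$, and this is fatal in the one case where your plan must rely on them: for the signless Laplacian with $b\neq \frac{n}{2}$ (so $n,b$ even), the two candidate vertices are the fixed points of the involution $x\mapsto b-x$, and by \eqref{E6} every retained eigenvalue of that type vanishes at them, so the support can shrink to $\{\theta_0\}=\{4\}$ plus perturbed eigenvalues (your own $n=10$, $b=2$ example). In that case your deferred step \emph{is} the whole proof. (Incidentally, your first regime is easier than you fear: $\theta_0$ always lies in the support since no vertex state is orthogonal to the all-ones eigenvector, and adjoining this integer to the retained support rigidifies the arithmetic enough to dispose of configurations like your $n=16$ Laplacian example without ever touching perturbed eigenvalues.)

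The paper's proof avoids all of this and never analyzes a perturbed eigenvalue. For the Laplacian, and for the signless case $b=\frac{n}{2}$, it exhibits two \emph{retained} eigenvalues in the candidates' support whose difference is less than $1$ (of the form $4\left|\sin\left(\frac{5\pi}{n}\right)\sin\left(\frac{\pi}{n}\right)\right|$), and invokes the periodicity criterion of \cite[Theorem 4]{kirk4}; the weight $\rho$ plays no role. For the signless case $b\neq\frac{n}{2}$ --- exactly where your approach is hardest --- it switches tools entirely: by Corollary \ref{C1} with $q=-1$, PST between the two fixed vertices is equivalent to end-to-end PST for the tridiagonal matrix $\widetilde{\mathscr{L}}_+$, whose diagonal is constantly $2$ except for a single entry $2+2\rho$ that sits off-center precisely because $b\neq\frac{n}{2}$; Kay's mirror-symmetry necessary condition \cite[Lemma 2]{kay2} then rules out PST for every $\rho$ simultaneously, with no spectral arithmetic at all. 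This quotient-plus-persymmetry idea is what your proposal is missing; without it, or without genuine proofs of the support and degree claims above, your argument does not close.
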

\begin{proof}
Consider the Laplacian case. If $\frac{b}{2} + \frac{n}{4}$ and $\frac{b}{2} + \frac{3n}{4}$ are integers, then from \eqref{E5} and \eqref{E6}, the eigenvalues $\theta_2$ and $\theta_3$ lie in the eigenvalue support of both vertices $\frac{b}{2} + \frac{n}{4}$ and $\frac{b}{2} + \frac{3n}{4}$. By \cite[Theorem 4]{kirk4}, both $\frac{b}{2} + \frac{n}{4}$ and $\frac{b}{2} + \frac{3n}{4}$ are not periodic as
\begin{equation*}
|\theta_3-\theta_2| = 4\left|\sin\ob{\frac{5\pi}{n}} \sin\ob{\frac{\pi}{n}} \right|
    <1,\quad \text{whenever }n \geq 15.
\end{equation*}
In the case of signless Laplacian, if $C_n + \rho~\{0,b\}$ has an odd number of vertices, then the claim follows directly from Lemma \ref{L4}. When $n\equiv 0\pmod 8$ and $b=\frac{n}{2},$ we deduce from \eqref{E5} and \eqref{E6} that both eigenvalues $\theta_2$ and $\theta_3$ lie in the eigenvalue support of the vertices $\frac{3n}{8}$ and $\frac{7n}{8}$. Hence, there is no vertex PST in this case as well. When both $n$ and $b~(\neq \frac{n}{2})$ are even, the graph $C_n + \rho~\cb{0,b}$ admits an involution $\phi$ that maps vertex $0$ to $b$ and fixing only the vertices $\frac{b}{2}$ and $\frac{n+b}{2}.$ For every choice of $b\neq \frac{n}{2}$, we obtain a half graph that is a path on $\frac{n-2}{2}$ vertices, with a vertex $j$ of degree $2+\rho$, for some $j<\lfloor \frac{n-2}{4}\rfloor$. Now, we use Corollary \ref{C1} to show the absence of vertex PST between the fixed vertices. When $q=-1,$ the matrix $\widetilde{\mathscr{L}}_+$, as described in Theorem \ref{T2}, is permutation-similar to a tridiagonal matrix with diagonal entries $2,$ except for a single entry $2+2\rho.$
Since the tridiagonal matrix is not symmetric about the anti-diagonal, 
it follows from \cite[Lemma 2]{kay2} that the path representing the tridiagonal matrix does not have vertex PST between its end vertices. Accordingly, vertex PST does not occur in $C_n + \rho~\{0,b\}$ between $\frac{b}{2}$ and $\frac{n+b}{2}.$ 
\end{proof}
Next we investigate the existence of pair PST in a cycle with a single edge perturbation relative to both Laplacian and signless Laplacian matrices.
\begin{lem}\label{L5}
Let $\rho \in \mathbb{Z}^+$, $b \in \mathbb{Z}_n\setminus \cb{0}$ and let $C_n$ be a cycle on $n \geq 13$ vertices. If $C_n+\rho~ \cb{0,b}$ exhibits pair PST from $\frac{1}{\sqrt{2}}(\e_k-\e_l),$ then the following hold:
\begin{enumerate}
    \item In the Laplacian case, $k+l$ must be either $b$ or $n+b.$
    
    \item In the signless Laplacian case:
    \begin{enumerate}
        \item If \( b \ne \frac{n}{2} \), then \( n \equiv 0 \pmod{2} \) and \( k + l = b + \frac{n}{2} \) or \( k + l = b + \frac{3n}{2} \). \item If \( b = \frac{n}{2} \), then \( n \equiv 0 \pmod{4} \) and \( k + l = \frac{n}{4} \) or \( k + l = \frac{5n}{4} \).
    \end{enumerate}
\end{enumerate}
\end{lem}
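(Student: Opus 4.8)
The plan is to read the constraint on $k+l$ directly off the eigenvalue support $\Lambda_\x(\mathscr{L})$ of the pair state $\x=\frac{1}{\sqrt2}(\e_k-\e_l)$, using strong cospectrality (\cite[Lemma 5.1(1)]{god25}) together with the periodicity/integrality conditions of \cite[Theorem 4]{kirk4} that already appear in the proofs of Lemma \ref{L4} and Theorem \ref{T4}. First I would record the projection of $\x$ onto each surviving eigenspace. For the doubly degenerate eigenvalue $\theta_j$ with $1\le j<\frac n2$, the surviving eigenvector is $\z_j$, so $\theta_j\in\Lambda_\x(\mathscr{L})$ exactly when $\x^T\z_j\ne0$. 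Applying \eqref{E5}, \eqref{E6} and the identities $\sin A-\sin B=2\cos\frac{A+B}{2}\sin\frac{A-B}{2}$ and $\cos A-\cos B=-2\sin\frac{A+B}{2}\sin\frac{A-B}{2}$ gives, in the signless case with $b\ne\frac n2$,
\[\x^T\z_j=-\frac{8}{\sqrt2\,n}\cos\ob{\frac{bj\pi}{n}}\cos\ob{\frac{(k+l-b)j\pi}{n}}\sin\ob{\frac{(k-l)j\pi}{n}},\]
and in the Laplacian case
\[\x^T\z_j=\frac{8}{\sqrt2\,n}\sin\ob{\frac{bj\pi}{n}}\sin\ob{\frac{(k+l-b)j\pi}{n}}\sin\ob{\frac{(k-l)j\pi}{n}}.\]
Since $k\ne l$ forces $\sin\ob{\frac{(k-l)j\pi}{n}}\ne0$ for the small indices, membership of $\theta_j$ in $\Lambda_\x(\mathscr{L})$ is governed solely by the vanishing of the factor carrying $k+l$.

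Next I would translate the claimed values of $k+l$ into statements about the support. In the Laplacian case, $k+l\equiv b\pmod n$ is precisely the condition that $\sin\ob{\frac{(k+l-b)j\pi}{n}}=0$ for every $j$, so it makes $\x$ orthogonal to all the $\z_j$ and leaves only integer eigenvalues (those from $\theta_0$, $\theta_{n/2}$ and the rank-one perturbed eigenvalues of $M_\rho=M+\rho\w\w^T$) in $\Lambda_\x(\mathscr{L})$. In the signless case with $b\ne\frac n2$, the condition $k+l\equiv b+\frac n2\pmod n$ makes $\cos\ob{\frac{(k+l-b)j\pi}{n}}$ vanish for every odd $j$, confining the support to the even indices $\theta_{2m}=2+2\cos\ob{\frac{2m\pi}{n/2}}$, i.e. to the spectrum of a half-length cycle; that $b+\frac n2$ be an admissible residue already forces $n$ even. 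When $b=\frac n2$ the factor $\cos\ob{\frac{bj\pi}{n}}$ itself annihilates all odd $j$, and after simplification $\theta_{2m}\in\Lambda_\x(\mathscr{L})$ is controlled by $\cos\ob{\frac{(k+l)2m\pi}{n}}$; the condition $k+l\equiv\frac n4\pmod n$ then removes the odd $m$ and confines the support to indices divisible by four, which requires $4\mid n$.

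The heart of the argument is the converse. By \cite[Theorem 4]{kirk4}, any two eigenvalues lying in the support of a state that admits PST must have difference satisfying the integrality (and parity) conditions there; in particular a non-integral algebraic integer cannot differ from another support eigenvalue by a quantity of absolute value strictly less than one. The eigenvalues $\theta_j=2+2\zeta\cos\ob{\frac{2j\pi}{n}}$ are algebraic integers of degree $\frac{\phi(n)}{2}$ and are non-integral for $n\ge7$. I would argue that whenever $k+l$ takes any residue other than those listed, the factor carrying $k+l$ fails to clear out the ``odd'' family, leaving at least two such $\theta_j,\theta_{j'}$ in $\Lambda_\x(\mathscr{L})$ while
\[0<\mb{\theta_j-\theta_{j'}}=4\,\mb{\sin\ob{\tfrac{(j+j')\pi}{n}}\sin\ob{\tfrac{(j-j')\pi}{n}}}<1\]
for $n\ge13$, exactly mirroring the estimate used in the proof of Theorem \ref{T4}. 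Sweeping over all admissible residues modulo $n$ in each of the three regimes leaves only the listed values of $k+l$, and the requirement that $b$, $b+\frac n2$, and $\frac n4$ be genuine residues supplies the parity conditions $n\equiv0\pmod2$ and $n\equiv0\pmod4$.

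I expect the main obstacle to be this last step: for every disallowed value of $k+l$ one must exhibit a genuinely incompatible pair of support eigenvalues at distance below one, while making sure the chosen indices $j,j'$ actually survive the simultaneous vanishing conditions coming from the $b$-factor and the $(k+l)$-factor, and are not instead absorbed into the perturbed integer eigenvalues of the rank-one update. In the regime $b=\frac n2$ the surviving support sits inside the spectrum of $C_{n/2}$, so the close pair may have to be selected among higher even indices (near $\frac n4$) rather than the smallest ones; keeping track of the three regimes and of the parity of $n$ in this index bookkeeping is where the real work lies, and the bound $n\ge13$ is precisely what forces the required eigenvalue gap strictly below one.
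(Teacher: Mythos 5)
Your proposal takes a genuinely different route from the paper, but it has two problems: the decisive step is missing, and one regime is analyzed incorrectly. The paper's proof is a \emph{single-eigenvalue} argument requiring no pairs of close eigenvalues and no sweep over residues: for $n\geq 13$ the eigenvalue $\theta_1=2+2\zeta\cos\ob{\frac{2\pi}{n}}$ has algebraic degree $\frac{\phi(n)}{2}\geq 3$, hence is not a quadratic integer, and by \cite[Theorem 3.2]{god25} such an eigenvalue cannot lie in the eigenvalue support of \emph{any} state admitting PST. Thus PST forces $(\e_k-\e_l)^T\z_1=0$; in that product the factor $\sin\ob{\frac{(k-l)\pi}{n}}$ is nonzero (since $0<k-l<n$) and the $b$-factor ($\cos\ob{\frac{b\pi}{n}}$, resp.\ $\sin\ob{\frac{b\pi}{n}}$) is nonzero (since $b\neq 0,\frac{n}{2}$, resp.\ $b\neq 0$), so the $(k+l)$-factor must vanish, which yields exactly the listed values. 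Your plan instead rests on a \cite{kirk4}-style gap criterion and requires you to exhibit, for \emph{every} disallowed residue of $k+l$ in each of the three regimes, two surviving support eigenvalues $\theta_j,\theta_{j'}$ with $0<\mb{\theta_j-\theta_{j'}}<1$. You explicitly leave this sweep undone and call it ``where the real work lies''; as written the proposal is a strategy, not a proof. It is also an unnecessarily heavy strategy: one non-quadratic eigenvalue in the support is already fatal, which is precisely why only $j=1$ is needed and why your worry about $\sin\ob{\frac{(k-l)j\pi}{n}}$ vanishing at higher indices never arises.

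There is, in addition, a concrete error in your treatment of the signless case $b=\frac{n}{2}$. You claim the factor $\cos\ob{\frac{bj\pi}{n}}$ ``annihilates all odd $j$'' so that the constraint comes from even indices via $\cos\ob{\frac{2m(k+l)\pi}{n}}$. But for $\zeta=1$, $b=\frac{n}{2}$ and odd $j$ one has $\w^Tv_j=\w^Tv_{n-j}=0$, so these eigenspaces are untouched by the rank-one update and the relevant projection formula is \eqref{E5}, not \eqref{E6}:
\begin{equation*}
(\e_k-\e_l)^T\z_j=\frac{-4}{\sqrt{n}}\,\sin\ob{\frac{j(k+l)\pi}{n}-\frac{\pi}{4}}\sin\ob{\frac{j(k-l)\pi}{n}}.
\end{equation*}
Far from dropping out, these odd-$j$ eigenvalues are exactly what pin down $k+l$: taking $j=1$ forces $\sin\ob{\frac{(k+l)\pi}{n}-\frac{\pi}{4}}=0$, i.e.\ $k+l\in\cb{\frac{n}{4},\frac{5n}{4}}$ and $4\mid n$. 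Your even-index condition $\cos\ob{\frac{2(k+l)\pi}{n}}=0$ only gives $k+l\equiv\frac{n}{4}\pmod{\frac{n}{2}}$, which would also admit $k+l=\frac{3n}{4}$ and $\frac{7n}{4}$ --- strictly weaker than the lemma. So in this regime your plan, even if the residue sweep were completed, would not reach the stated conclusion.
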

\begin{proof}
If $C_n$ is a cycle on $n \geq 13$ vertices, then the eigenvalue $\theta_1=2+2\zeta \cos\ob{\frac{2\pi}{n}}$ is not a quadratic integer. Accordingly, by \cite[Theorem 3.2]{god25}, there is no pair PST in $C_n+\rho~ \cb{0,b}$ from the pair state $\frac{1}{\sqrt{2}}(\e_k-\e_l)$ whenever $\theta_1$ belongs to its eigenvalue support. It follows from \eqref{E5} and \eqref{E6} that
\[
(\e_k-\e_l)^T\z_1=\begin{cases}
\dfrac{-4}{\sqrt{n}}~ \sin{\left( \dfrac{(k+l)\pi}{n}-\dfrac{\pi}{4}\right)}\sin{\left( \dfrac{(k-l)\pi}{n}\right)}, & \zeta = 1 \text{~and~} b=\dfrac{n}{2}, \\ \\
-\dfrac{8}{n}~ \cos{\left( \dfrac{b\pi}{n} \right)} \cos{\left( \dfrac{(k+l-b)\pi}{n} \right)}\sin{\left( \dfrac{(k-l)\pi}{n} \right)}, & \zeta = 1 \text{~and~} b\neq \dfrac{n}{2}, \\
\\
\dfrac{8}{n}~ \sin{\left( \dfrac{b\pi}{n} \right)} \sin{\left( \dfrac{(k+l-b)\pi}{n} \right)}\sin{\left( \dfrac{(k-l)\pi}{n} \right)}, &  \zeta = -1.

\end{cases}
\]
Note that \( \sin\left( \frac{(k - l)\pi}{n} \right) \neq 0 \) since \( 0 < k - l < n \). As \( 0 \leq k + l - b < 2n \), we have \( (\e_k - \e_l)^T \z = 0 \) under the following conditions. When \( \zeta = 1 \), the expression vanishes if \( b \neq \frac{n}{2} \) and \( k + l = b + \frac{n}{2} \) or \( b + \frac{3n}{2} \), or if \( b = \frac{n}{2} \) and \( k + l = \frac{n}{4} \) or \( \frac{5n}{4} \). When \( \zeta = -1 \), the expression vanishes if \( k + l = b \) or \( n + b \).
\end{proof}
Now we include some examples of edge-perturbed cycles that exhibit pair PST with respect to Laplacian and signless Laplacian matrices.
\begin{exm}
The edge perturbed cycle $C_n+\rho~\cb{0,b}$ exhibits perfect pair state transfer in the following cases:
	\begin{enumerate}
		\item $n=3$ and $b=1.$ If $\rho$ is a positive integer, then Laplacian PST occurs between $\frac{1}{\sqrt{2}}(\e_0-\e_2)$ and $\frac{1}{\sqrt{2}}(\e_1-\e_2)$ at time $\frac{\pi}{2\rho}.$
		\item $n=4$ and $b=1.$ If $\rho=1$, then signless Laplacian PST occurs between $\frac{1}{\sqrt{2}}(\e_0-\e_1)$ and $\frac{1}{\sqrt{2}}(\e_2-\e_3)$ at time $\frac{\pi}{2}.$ 
		\item $n=4$ and $b=2.$ If $\rho=1$, then Laplacian PST occurs between $\frac{1}{\sqrt{2}}(\e_0-\e_1)$ and $\frac{1}{\sqrt{2}}(\e_0-\e_3)$ at time $\frac{\pi}{2}.$ 
		\item $n=4$ and $b=2.$ If $\rho=2$, then Laplacian PST occurs between $\frac{1}{\sqrt{2}}(\e_0-\e_1)$ and $\frac{1}{\sqrt{2}}(\e_2-\e_3)$ at time $\frac{\pi}{2}.$ 
	\end{enumerate}

\end{exm}
The following observation shows the non-existence of pair PST in the edge-perturbed cycle with respect to the signless Laplacian matrix. 

\begin{thm}
Let $\rho \in \mathbb{Z}^+$ and $b \in \mathbb{Z}_n\setminus \cb{0}.$ The graph $C_n+\rho~\cb {0,b}$ does not exhibit perfect pair state transfer with respect to the signless Laplacian matrix for $n \geq 16$ whenever $b=\frac{n}{2},$ and for $n \geq 22$ whenever $b\neq \frac{n}{2}$.
\end{thm}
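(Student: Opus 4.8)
The plan is to argue by contradiction, following the template of Theorem \ref{T4}. Since perfect state transfer between real pure states forces both states to be periodic (by strong cospectrality together with \cite[Lemma 5.1]{god25}), it suffices to show that the pair state $\frac{1}{\sqrt{2}}(\e_k-\e_l)$ cannot be periodic with respect to the signless Laplacian matrix (the case $\zeta=1$) once $n$ is large. Assume without loss of generality that $k>l$, so $0<k-l<n$. By Lemma \ref{L5}, pair PST can only originate from $\frac{1}{\sqrt{2}}(\e_k-\e_l)$ when $n$ is even and either $b\neq\frac{n}{2}$ with $k+l\in\cb{b+\frac{n}{2},\,b+\frac{3n}{2}}$, or $b=\frac{n}{2}$ with $n\equiv 0 \pmod 4$ and $k+l\in\cb{\frac{n}{4},\,\frac{5n}{4}}$; the odd-$n$ and $n\equiv 2\pmod 4$ possibilities are already excluded by that lemma. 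I would treat the two surviving cases separately.

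First I would determine which multiplicity-two eigenvalues $\theta_j=2+2\cos\ob{\frac{2j\pi}{n}}$, $1\leq j<\frac{n}{2}$, lie in the eigenvalue support of the pair state. Substituting the prescribed value of $k+l$ into \eqref{E5} and \eqref{E6} and applying product-to-sum identities, the overlap $(\e_k-\e_l)^T\z_j$ factors as $\sin\ob{\frac{(k-l)j\pi}{n}}$ times trigonometric terms depending only on $j$ and $k+l$. When $b\neq\frac{n}{2}$ the factor arising from $k+l-b=\frac{n}{2}$ is $\cos\ob{\frac{j\pi}{2}}$, which vanishes precisely for odd $j$; hence the support consists of even-indexed eigenvalues, and $\theta_2,\theta_4$ both belong to it provided $\cos\ob{\frac{2b\pi}{n}}$, $\cos\ob{\frac{4b\pi}{n}}$, $\sin\ob{\frac{2(k-l)\pi}{n}}$ and $\sin\ob{\frac{4(k-l)\pi}{n}}$ are all nonzero. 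When $b=\frac{n}{2}$ one uses \eqref{E5} for odd indices and \eqref{E6} for even indices; with $k+l=\frac{n}{4}$ the surviving indices are exactly those with $j\equiv 3\pmod 4$ or $j\equiv 0\pmod 4$, so the two smallest in-support eigenvalues are $\theta_3$ and $\theta_4$.

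With two in-support eigenvalues identified, the contradiction comes from a gap estimate as in Theorem \ref{T4}. For $b\neq\frac{n}{2}$ the relevant gap is $\mb{\theta_4-\theta_2}=4\mb{\sin\ob{\frac{6\pi}{n}}\sin\ob{\frac{2\pi}{n}}}$, which equals $1$ at $n=20$ and is strictly less than $1$ for every even $n\geq 22$; for $b=\frac{n}{2}$ it is $\mb{\theta_4-\theta_3}=4\mb{\sin\ob{\frac{7\pi}{n}}\sin\ob{\frac{\pi}{n}}}$, which equals $1$ at $n=12$ and is strictly less than $1$ for every $n\equiv 0\pmod 4$ with $n\geq 16$. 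Invoking the periodicity obstruction of \cite[Theorem 4]{kirk4} exactly as in the proof of Theorem \ref{T4} (a state carrying two such eigenvalues with gap below $1$ in its support cannot be periodic), one reaches a contradiction and recovers the thresholds $n\geq 16$ and $n\geq 22$. The fact that these gaps equal $1$ precisely at $n=20$ and $n=12$, the largest admissible values just below the stated bounds, is what makes the thresholds sharp for this method.

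The main obstacle I anticipate is the degenerate situation in which one of the two chosen eigenvalues drops out of the support because a sine or cosine factor vanishes, for instance when $k-l\in\cb{\frac{n}{4},\frac{n}{2},\frac{3n}{4}}$ forces $\sin\ob{\frac{4(k-l)\pi}{n}}=0$, or when $\cos\ob{\frac{2b\pi}{n}}=0$. In each such sub-case the remedy is to replace the missing eigenvalue by the next surviving one of the same parity class (such as $\theta_6$ or $\theta_8$) and to check that its gap to the retained eigenvalue is still below $1$ throughout the claimed range; showing that two admissible in-support eigenvalues with gap less than $1$ always exist, uniformly over the finitely many residues of $b$ and $k-l$, is the genuinely delicate part. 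A secondary point needing care is that $\theta_2,\theta_3,\theta_4$ must be non-integral for the gap argument to bite, which can fail at isolated values such as $n=24$ (where $\theta_4=3$); at those values I would instead combine a rational and an irrational eigenvalue in the support and apply the ratio condition underlying \cite[Theorem 3.2]{god25}, since the obstruction depends only on the arithmetic of the support eigenvalues, which are algebraic integers for the integer-weighted graph $C_n+\rho\cb{0,b}$.
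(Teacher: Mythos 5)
Your treatment of the case $b \neq \frac{n}{2}$ is essentially the paper's own proof: the paper likewise places $\theta_2$ and $\theta_4$ in the eigenvalue support of the pair states permitted by Lemma \ref{L5}, bounds $\mb{\theta_4-\theta_2}=4\mb{\sin\ob{\frac{6\pi}{n}}\sin\ob{\frac{2\pi}{n}}}<1$ for $n\geq 22$, and invokes \cite[Corollary 3.4]{kim}. For $b=\frac{n}{2}$, however, the paper does not use a gap estimate at all. It uses the involution $a\mapsto a+\frac{n}{2}$ of $C_n+\rho~\cb{0,\frac{n}{2}}$: by Corollary \ref{C1}, pair PST reduces to vertex PST for the matrix $\mathscr{L}_-$, which (for $q=-1$) the paper identifies with $2I-A(P_{\frac{n}{2}})$, and it then cites the non-existence of adjacency PST on paths with at least four vertices \cite{cou2024}; together with the constraints $n\equiv 0 \pmod 4$ and $n\geq 13$ from Lemma \ref{L5}, this produces the threshold $n\geq 16$ with no spectral gap computation.

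The genuine gap in your proposal is in the $b=\frac{n}{2}$ case, at exactly the point you flagged as ``delicate.'' Your identification of the support as the indices $j\equiv 3\pmod 4$ and $j\equiv 0\pmod 4$ requires $\sin\ob{\frac{3(k-l)\pi}{n}}\neq 0$ and $\sin\ob{\frac{4(k-l)\pi}{n}}\neq 0$, but Lemma \ref{L5} does not exclude the pairs $\cb{0,\frac{n}{4}}$, $\cb{\frac{n}{2},\frac{3n}{4}}$, and (when $8\mid n$) $\cb{\frac{3n}{8},\frac{7n}{8}}$: these satisfy $k+l\in\cb{\frac{n}{4},\frac{5n}{4}}$ but have $k-l\in\cb{\frac{n}{4},\frac{n}{2}}$. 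For such pairs every index $j\equiv 0\pmod 4$ drops out of the support, while $j\equiv 1,2\pmod 4$ are killed by the other trigonometric factors, so the surviving unperturbed eigenvalues are exactly $\theta_3,\theta_7,\theta_{11},\dots$. Your fallback then needs some pairwise gap among these to be less than $1$, and this fails throughout the low end of the claimed range: $\mb{\theta_7-\theta_3}=4\sin\ob{\frac{10\pi}{n}}\sin\ob{\frac{4\pi}{n}}$ is approximately $2.61,\,2.35,\,1.93,\,1.56$ for $n=16,20,24,28$, and one checks that \emph{every} pairwise gap among the surviving $\theta_j$ exceeds $1$ for $n\in\cb{16,20,24,28,36}$ (the first success is $n=32$, via $\mb{\theta_{11}-\theta_{15}}\approx 0.85$, and the $\theta_3,\theta_7$ gap only drops below $1$ from $n=40$ onward). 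Moreover, these degenerate pair states are not orthogonal to the perturbation-active subspace (for instance $(\e_0-\e_{n/4})^Tv_2\neq 0$), so their supports also contain $\rho$-dependent perturbed eigenvalues that your plan does not control. Hence the cases the theorem must cover at $n=16,20,24,28,36$ cannot be closed by your method, which is precisely the difficulty the paper's involution reduction is designed to bypass. (Two smaller remarks: your worry about integrality at $n=24$ is unnecessary, since the gap obstruction you cite does not depend on non-integrality; and the degenerate pairs you note for $b\neq\frac{n}{2}$, such as $k-l=\frac{n}{2}$, are indeed passed over silently in the paper's own proof of that case as well.)
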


\begin{proof}
In the case $b=\frac{n}{2},$ the graph $C_n+\rho~\cb {0,b}$ admits an involution $\phi$ that maps the vertex $0$ to $b$. According to Lemma \ref{L5}, if the graph exhibits pair PST then $n \equiv 0 \pmod{4}$ and $k + l = \frac{n}{4}$  or $ k + l = \frac{5n}{4}$. The half graph induced by the involution $\phi$ is a path on $\frac{n}{2}$ vertices. When $q=-1,$ the matrix $\mathscr{L}_-$ becomes $2I-A(P_{\frac{n}{2}}),$ where $A(P_{\frac{n}{2}})$ is the adjacency matrix of the path on $\frac{n}{2}$ vertices. By \cite[Theorem 9]{cou2024}, there is no vertex PST with respect to $A(P_{\frac{n}{2}})$ whenever $n\geq 8$. Hence, by Corollary \ref{C1}, no pair PST occurs in this case whenever $n \geq 16.$

In the case $b \neq \frac{n}{2},$ it follows from \eqref{E5} and \eqref{E6} that the eigenvalues $\theta_2$ and $\theta_4$ belong to the eigenvalue support of $\frac{1}{\sqrt{2}}(\e_k-\e_l)$, where $k+l =b+\frac{n}{2}$ or $b+\frac{3n}{2}.$ Then
	$$|\theta_4-\theta_2| = 4\left|\sin\ob{\frac{6\pi}{n}} \sin\ob{\frac{2\pi}{n}} \right|
	\leq \frac{48\pi^2}{n^2}.$$
	Note that $|\theta_4-\theta_2| <1,$ whenever $n \geq 22.$ Using \cite[Corollary 3.4]{kim}, we conclude that no pair state transfer occurs in this case.
\end{proof}
In the case of Laplacian we have the following observation.
\begin{rem}
Let $\rho \in \mathbb{Z}^+$, $b \in \mathbb{Z}_n\setminus \cb{0}$ and let $C_n$ be a cycle on $n \geq 13$ vertices. The graph $C_n+\rho~\cb {0,b}$ admits an involution $\phi$ that maps vertex $0$ to $b$, where $a+\phi(a)\equiv b\pmod{n}$ for all $a\in\mathbb{Z}_n$. By Lemma \ref{L5}(1), it is enough to investigate pair PST from $\frac{1}{\sqrt{2}}(\e_a-\e_{\phi(a)}),$ where $a$ is a vertex of the half graph induced by the involution $\phi$. In the case $n=2m,$ the involution $\phi$ fixes exactly two vertices of $C_n+\rho~\cb {0,b}$, namely $\frac{b}{2}$ and $\frac{n+b}{2}$, otherwise fixes only two edges. Then the corresponding half graph is a path on $m-1$ vertices whenever $b$ is even, otherwise a path on $m$ vertices. The matrix $\mathscr{L}_-$, as stated in Corollary~\ref{C1}, is the Laplacian matrix of a path in which a potential $2\rho$ is placed at a vertex $j < m$, where the potentials at the end vertices are $1$ when $b$ is even, $2$ when $b\neq 1$ odd, and all other vertices have no potential. For $b = 1$, one end vertex has potential $2\rho+2$ and the other has potential $2$, with no potential at the remaining vertices. Thus, vertex PST on the path with potentials (representing the half-graph) is equivalent to pair PST in $C_n+\rho~\cb {0,b}$ whenever $n$ is even. If $n$ is odd, the involution $\phi$ fixes one vertex and one edge, and an analogous conclusion holds in this case as well.
\end{rem}
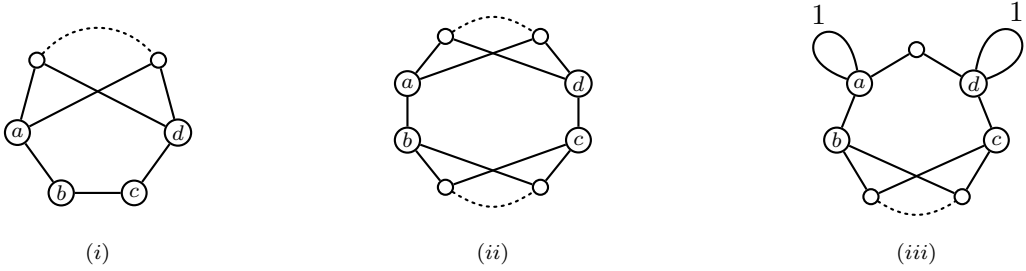
\begin{figure}
\centering
	% --- First Figure ---
	\begin{minipage}[t]{0.3\textwidth}
		\centering
		\begin{tikzpicture}[scale=0.32]
			\node[circle,thick,draw,inner sep=2pt] (A) at (1,1) {};
			\node[circle,thick,draw,inner sep=1.3pt] (B) at (0.2,-2) {\scriptsize $a$};
			\node[circle,thick,draw,inner sep=1pt] (C) at (2,-4.5) {\scriptsize $b$};
			\node[circle,thick,draw,inner sep=1.5pt] (D) at (5,-4.5) {\scriptsize $c$};
			\node[circle,thick,draw,inner sep=1pt] (E) at (6.8,-2) {\scriptsize $d$};
			\node[circle,thick,draw,inner sep=2pt] (F) at (6,1) {};

			\draw[thick] (A) -- (B);
			\draw[thick] (B) -- (C);
			\draw[thick] (C) -- (D);
			\draw[thick] (D) -- (E);
			\draw[thick] (E) -- (F);
			
			\draw[thick] (A) -- (E);
			\draw[thick] (F) -- (B);
			\draw[dotted, line cap=round, line width=0.8pt] (A) .. controls (2.5,2.7) and (4.5,2.7) .. (F);
			
			\node at (3.5,-7) {\scriptsize$(i)$};
		\end{tikzpicture}
	\end{minipage}
\hfill
	% --- Second Figure ---
	\begin{minipage}[t]{0.3\textwidth}
		\centering
		\begin{tikzpicture}[scale=0.25]
			\node[circle,thick,draw,inner sep=1.3pt] (A) at (0,1) {\scriptsize $a$};
			\node[circle,thick,draw,inner sep=1pt] (B) at (0,-2) {\scriptsize $b$};
			\node[circle,thick,draw,inner sep=2pt] (C) at (2,-4.5) {};
			\node[circle,thick,draw,inner sep=2pt] (D) at (7,-4.5) {};
			\node[circle,thick,draw,inner sep=1.5pt] (E) at (9,-2) {\scriptsize $c$};
			\node[circle,thick,draw,inner sep=1pt] (F) at (9,1) {\scriptsize $d$};
			\node[circle,thick,draw,inner sep=2pt] (G) at (2,3.5) {};
			\node[circle,thick,draw,inner sep=2pt] (H) at (7,3.5) {};
			
			\draw[thick] (A) -- (B);
			\draw[thick] (B) -- (C);
			\draw[thick] (A) -- (H);
			\draw[thick] (D) -- (E);
			\draw[thick] (E) -- (F);
			\draw[thick] (F) -- (H);
			\draw[thick] (A) -- (G);
			\draw[thick] (F) -- (G);
			\draw[thick] (B) -- (D);
			\draw[thick] (C) -- (E);
			
			\draw[dotted, line cap=round, line width=0.8pt] 
			(G) .. controls (4,4.8) and (5,4.8) .. (H);
			\draw[dotted, line cap=round, line width=0.8pt] 
			(C) .. controls (4,-5.8) and (5,-5.8) .. (D);
			
			\node at (4.5,-8) {\scriptsize$(ii)$};
		\end{tikzpicture}
	\end{minipage}
    \hfill
	\begin{minipage}[t]{0.35\textwidth}
		\centering
		\begin{tikzpicture}[scale=0.3]
			\node[circle,thick,draw,inner sep=1.3pt] (A) at (1,0.5) {\scriptsize $a$};
			\node[circle,thick,draw,inner sep=1pt] (B) at (0,-2) {\scriptsize $b$};
			\node[circle,thick,draw,inner sep=2pt] (C) at (1.5,-4.5) {};
			\node[circle,thick,draw,inner sep=2pt] (D) at (5.5,-4.5) {};
			\node[circle,thick,draw,inner sep=1.5pt] (E) at (7,-2) {\scriptsize $c$};
			\node[circle,thick,draw,inner sep=1pt] (F) at (6,0.5) {\scriptsize $d$};
			\node[circle,thick,draw,inner sep=2pt] (G) at (3.5,2) {};

			\draw[thick] (A) -- (B);
			\draw[thick] (B) -- (D);
			\draw[thick] (C) -- (E);
			\draw[thick] (D) -- (E);
			\draw[thick] (E) -- (F);
			\draw[thick] (F) -- (G);
			\draw[thick] (A) -- (G);
			\draw[thick] (C) -- (B);
			\draw[dotted, line cap=round, line width=0.8pt] (C) .. controls (3,-5.5) and (4,-5.5) .. (D);
			
			\draw[thick] (A) to[out=160, in=100, looseness=15] node[above] {\small 1} (A);
			\draw[thick] (F) to[out=80, in=20, looseness=15] node[above] {\small 1} (F);
			
			\node at (3.5,-7) {\scriptsize$(iii)$};
			\end{tikzpicture}
	\end{minipage}
	\caption{Pair state transfer on cycles with potentials and additional edges.}
\label{f4}
\end{figure}
The only cycles that admit pair PST with respect to the $q$-Laplacian matrix are $C_4,~C_6,$ and $C_8$. Beyond these cases, pair PST can also be induced in a cycle $C_n $ with $n \geq 6$ vertices by suitably inserting additional edges and potentials. In particular, it implies from Corollary \ref{C1} that the graph in \ref{f4}$(i)$ exhibits Laplacian pair PST between $\frac{1}{\sqrt{2}}(\e_a-\e_d)$ and $\frac{1}{\sqrt{2}}(\e_b-\e_c)$ at time $\frac{\pi}{2}.$ More generally, in \ref{f4}$(i)$, if potentials $\alpha$ and $\beta$ are assigned to the vertices $a,d$ and $b,c,$ respectively, with $\alpha-\beta=\frac{1}{q}-1$, then pair PST between these pair states can be achieved with respect to the $q$-Laplacian matrix. Similarly, the graphs in Figure \ref{f4}$(ii)$ and Figure \ref{f4}$(iii)$ exhibit pair PST between $\frac{1}{\sqrt{2}}(\e_a-\e_d)$ and $\frac{1}{\sqrt{2}}(\e_b-\e_c)$ at time $\frac{\pi}{2}$ relative to the $q$-Laplacian matrix. The following two observations are immediate.
\begin{thm}
Laplacian perfect pair state transfer can be achieved in a cycle with at least six vertices by inserting exactly two suitable edges.
\end{thm}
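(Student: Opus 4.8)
The plan is to realize an arbitrary such cycle as an instance of the construction underlying Figure~\ref{f4}$(i)$, and to reduce pair PST in the perturbed cycle to vertex PST in the associated $\mathscr{L}_-$ via Corollary~\ref{C1}. Fix $C_n$ with $n\geq 6$ and choose four consecutive vertices $a,b,c,d$ together with the remaining neighbour $A$ of $a$ and the remaining neighbour $F$ of $d$, so that $A\text{-}a\text{-}b\text{-}c\text{-}d\text{-}F$ is a sub-path of $C_n$ on six distinct vertices. Insert exactly the two unit-weight edges $a\text{-}F$ and $d\text{-}A$. First I would verify that the reflection of $C_n$ through the perpendicular bisector of the edge $b\text{-}c$ restricts to an involution $\phi=(a~d)(b~c)(A~F)\cdots$ of the perturbed graph: this reflection preserves every original cycle edge and interchanges the two inserted edges $a\text{-}F$ and $d\text{-}A$, hence is an automorphism of order two. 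This places the graph within the block framework of Lemma~\ref{L1} and Theorem~\ref{T2}, with $a$ and $b$ as orbit representatives.

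The heart of the argument is to compute the $\{a,b\}$ block of $\mathscr{L}_-=\mathscr{L}'-A_\phi$ in the Laplacian case $q=1$ and to show it decouples from the remaining representatives. Here the diagonal of $\mathscr{L}'$ records the degrees $\deg(a)=3$ and $\deg(b)=2$; the cross edge $b\text{-}c=b\text{-}\phi(b)$ contributes $-1$ to $A_\phi$ at position $(b,b)$, while the inserted cross edge $a\text{-}F=a\text{-}\phi(A)$ contributes $-1$ at position $(a,A)$. The decisive point is the cancellation $\mathscr{L}_-[a,A]=\mathscr{L}'[a,A]-A_\phi[a,A]=-1-(-1)=0$: the inserted edge exactly annihilates the original $a\text{-}A$ coupling. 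Since $b$ is adjacent only to $a$ among the representatives, the subspace $\text{span}\{\e_a,\e_b\}$ is $\mathscr{L}_-$-invariant, with
\[
\mathscr{L}_-\big|_{\{a,b\}}=\begin{bmatrix}3&-1\\-1&3\end{bmatrix}.
\]

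This $2\times 2$ block has eigenvalues $2$ and $4$ with eigenvectors $\tfrac{1}{\sqrt2}(\e_a\pm\e_b)$, so $U_{\mathscr{L}_-}\ob{\tfrac{\pi}{2}}\e_a=-\e_b$; that is, vertex PST occurs from $\e_a$ to $\e_b$ relative to $\mathscr{L}_-$ at time $\tfrac{\pi}{2}$. Finally I would invoke Corollary~\ref{C1}(1): this vertex PST is equivalent to PST relative to the Laplacian of the perturbed cycle between $M\e_a=\tfrac{1}{\sqrt2}\ob{\e_a-\e_d}$ and $M\e_b=\tfrac{1}{\sqrt2}\ob{\e_b-\e_c}$ at the same time $\tfrac{\pi}{2}$. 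This is precisely Laplacian pair PST, achieved by inserting the two edges $a\text{-}F$ and $d\text{-}A$, as required.

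The step I expect to be the main obstacle is establishing that the decoupling of the $\{a,b\}$ block holds uniformly for every $n\geq 6$. One must confirm that $a$ and $b$ acquire no additional neighbours among the orbit representatives as $n$ grows (the far-side vertices only enter through the fixed set $S$ or the $\widetilde{\mathscr{L}_+}$ block, neither of which affects $\mathscr{L}_-$), so that the single cancellation $\mathscr{L}_-[a,A]=0$ genuinely closes the block in all cases, including the boundary instance $C_6$ where $A$ and $F$ are already adjacent. Once this robustness is checked, only the routine $2\times 2$ spectral calculation remains.
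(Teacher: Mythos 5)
Your construction is exactly the paper's Figure \ref{f4}$(i)$: insert the two edges $a$--$F$ and $d$--$A$ along a six-vertex sub-path of $C_n$, use the reflection involution $(a~d)(b~c)(A~F)\cdots$, and reduce pair PST to vertex PST in $\mathscr{L}_-$ via Corollary \ref{C1}, so the approach coincides with the paper's. Your explicit verification that the inserted edge cancels the $a$--$A$ coupling in $\mathscr{L}_-$, leaving the decoupled block $\begin{bmatrix}3&-1\\-1&3\end{bmatrix}$ with PST at time $\frac{\pi}{2}$, is correct (including the boundary case $n=6$) and simply fills in details the paper leaves implicit.
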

\begin{thm}
$q$-Laplacian perfect pair state transfer can be achieved in a cycle with at least eight vertices by inserting exactly four suitable edges.
\end{thm}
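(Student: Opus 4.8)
The plan is to produce, for each $n \geq 8$, an explicit perturbation of $C_n$ by four edges that carries a non-trivial involution, and then transfer the problem to a vertex-PST question on the half-graph via Corollary \ref{C1}. Concretely, I would place the four edges symmetrically with respect to a reflection axis of the cycle, following the pattern of Figure \ref{f4}(ii)--(iii): two of them create an involution $\phi$ interchanging $a \leftrightarrow d$ and $b \leftrightarrow c$ and simultaneously cancel the appropriate half-graph edges inside $\mathscr{L}_-$, while the remaining two (realized as loops at the symmetric vertices, or as additional chords as in Figure \ref{f4}(ii)) supply the potentials that a general parameter $q$ requires. This mirrors the two-edge Laplacian construction of Figure \ref{f4}(i): the case $q = 1$ needs no potential at all, precisely because the required potential difference $\frac1q - 1$ then vanishes.

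With the construction fixed, I would invoke Lemma \ref{L1} and Corollary \ref{C1} to reduce $q$-Laplacian pair PST between $\frac{1}{\sqrt2}(\e_a - \e_d)$ and $\frac{1}{\sqrt2}(\e_b - \e_c)$ to ordinary vertex PST between $a$ and $b$ relative to $\mathscr{L}_- = \mathscr{L}' - A_\phi$ on the half-graph. The heart of the argument is the computation of $\mathscr{L}_-$: each chord running from a half-graph vertex to the $\phi$-image of a neighbouring half-graph vertex contributes to $A_\phi$ exactly the entry that cancels the corresponding edge of $\mathscr{L}'$. I expect this cancellation to detach $a$ and $b$ from the remainder of the half-graph, leaving the symmetric block
\[
\begin{bmatrix} d_a & -q \\ -q & d_b \end{bmatrix},\qquad d_a = 1 + 2q^2 + q^2\alpha,\quad d_b = 1 + q^2 + q^2\beta + q,
\]
where $\alpha$ and $\beta$ denote the potentials placed at $\{a,d\}$ and $\{b,c\}$ by the two extra edges.

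It then remains to tune the potentials. Imposing $d_a = d_b$ reduces exactly to $\alpha - \beta = \frac1q - 1$, and under this condition $\frac{1}{\sqrt2}(\e_a \pm \e_b)$ are the eigenvectors of the block, so that vertex PST from $a$ to $b$ occurs at time $\frac{\pi}{2q}$, in line with the $K_{m,m}-M_k$ and $C_6$ examples treated earlier. By Corollary \ref{C1} this lifts to the claimed pair PST in the perturbed cycle, for every $q$ and every admissible $n$. For those $q$ with $\frac1q - 1 < 0$ one simply places the potentials on $\{b,c\}$ rather than $\{a,d\}$, so that both potentials (hence both loop weights) remain positive.

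The main obstacle is twofold. First, one must verify rigorously that after subtracting $A_\phi$ the vertices $a$ and $b$ are \emph{completely} decoupled from the rest of the half-graph, that is, every half-graph edge incident to $a$ other than $a$--$b$ is matched and cancelled by a cross-edge, so that the displayed $2\times 2$ block is a genuine invariant block of $\mathscr{L}_-$ and not merely a principal submatrix. Second, one must check that the four symmetric edges can be inserted without coinciding with existing cycle edges or with one another while keeping $\phi$ non-trivial; this is where the hypothesis $n \geq 8$ enters, guaranteeing enough room along the cycle for the symmetric placement of all four edges together with the decoupled block.
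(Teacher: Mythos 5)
Your overall framework is the paper's: build a reflection involution, use Corollary \ref{C1} to reduce pair PST in the perturbed cycle to vertex PST on a decoupled $2\times 2$ block of $\mathscr{L}_-$, and check the decoupling. But the construction you actually compute does not prove this statement, and the device meant to repair it fails. The block you display, with $d_a = 1+2q^2+q^2\alpha$ and $d_b = 1+q^2+q+q^2\beta$, is the \emph{two}-chord perturbation of Figure \ref{f4}(i) (only $a$ and $d$ receive chords, while $b$ stays adjacent to $c=\phi(b)$, which is where the $+q$ comes from), supplemented by vertex potentials. That inserts two edges, not four. You then propose to count the potentials as the remaining two edges by realizing them as loops; here the argument breaks. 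If "loop" just means a potential drawn as a loop (as in the paper's figures), you have inserted only two edges and the statement is not proved --- indeed the paper itself separates the two devices, and its seven-vertex example in Figure \ref{f4}(iii) shows that "two edges plus two potentials" cannot be what the eight-vertex, four-edge statement means. If instead the loops are genuine weighted edges, then under the $q$-Laplacian a loop is \emph{not} a potential: a loop of weight $w$ at $v$ adds $w$ to both $A_{vv}$ and the row sum $\Delta_{vv}$, hence shifts $\mathscr{L}_{vv}$ by $q^2w-qw$, i.e.\ it acts as an effective potential $w(q-1)/q$, not $w$. With your tuning (loop weight $\left|\frac{1}{q}-1\right|$ placed on $\{a,d\}$ or on $\{b,c\}$ according to sign), the required equality $d_a=d_b$ reduces to $\frac{(q-1)^2}{q^2}=\frac{q-1}{q}$, which holds for no $q\neq 1$; your graphs have no pair PST outside the Laplacian case, where loops are invisible anyway. (The correct loop tuning, which you never derive, is loop weights differing by exactly $1$, e.g.\ weight-$1$ loops at $b$ and $c$, independently of $q$.)

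The route the statement actually rests on --- which you name ("additional chords as in Figure \ref{f4}(ii)") but never compute --- removes the need for any tuning. Take the cycle $G,a,b,C,\dots,D,c,d,H,\dots$ with the reflection $\phi$ swapping $a\leftrightarrow d$, $b\leftrightarrow c$, $C\leftrightarrow D$, $G\leftrightarrow H$, and insert the four chords $\{a,H\}$, $\{d,G\}$, $\{b,D\}$, $\{c,C\}$. Now \emph{both} $a$ and $b$ have degree three, and in $\mathscr{L}_-=\mathscr{L}'-A_\phi$ both of their remaining half-graph edges ($a$--$G$ and $b$--$C$) are cancelled by cross edges, leaving the block
\[
\begin{bmatrix} 1+2q^2 & -q\\ -q & 1+2q^2 \end{bmatrix},
\]
whose diagonal entries agree for every $q$, with no potentials at all. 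Vertex PST in this block at time $\frac{\pi}{2q}$ lifts by Corollary \ref{C1} to pair PST between $\frac{1}{\sqrt2}\ob{\e_a-\e_d}$ and $\frac{1}{\sqrt2}\ob{\e_b-\e_c}$. This is also where the hypothesis $n\geq 8$ genuinely enters: the eight vertices $G,a,b,C,D,c,d,H$ must be distinct so that the four chords exist and differ from cycle edges. Your two-chord-plus-loops construction would need only $n\geq 6$, a further sign that it is not the construction this statement is about.
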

\subsection{Paths}
Let $P_n$ be a path on $n$ vertices, and the vertices of $P_n$ are labeled consecutively so that the vertices $j$ and $k$ are adjacent whenever $|j-k|=1.$ Suppose $P_n(\omega)$ denotes the path on $n$ vertices having potential $\omega$ only at the end vertices. The $q$-Laplacian matrix $\mathscr{L}$ corresponding to $P_n(\omega)$ is obtained by
$$\mathscr{L}=(1+q^2)I+(\omega -1)q^2(\e_1\e_1^T+\e_n\e_n^T)-qA.$$
The matrix $A+(1-\omega)q(\e_1\e_1^T+\e_n\e_n^T)$ represents the adjacency matrix of $P_n((1-\omega)q)$. The continuous-time quantum walk on $P_n(\omega)$ relative to $\mathscr{L}$ is equivalent to that on $P_n((1-\omega)q)$ relative to the adjacency matrix. Thus, we have the following observation.

\begin{thm}\label{T8}
Let $P_n(\omega)$ be a path on $n$ vertices with potential $\omega$ assigned only at the end vertices. Then $P_n(\omega)$ exhibits $q$-Laplacian PST (respectively, PGST) if and only if $P_n((1-\omega)q)$ exhibits PST (respectively, PGST) with respect to its adjacency matrix.
\end{thm}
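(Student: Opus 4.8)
The plan is to leverage the matrix identity recorded just above the statement, namely $\mathscr{L} = (1+q^2)I - qB$, where $B = A + (1-\omega)q(\e_1\e_1^T + \e_n\e_n^T)$ is the adjacency matrix of $P_n((1-\omega)q)$; one verifies this directly by expanding $-qB = -qA + (\omega-1)q^2(\e_1\e_1^T+\e_n\e_n^T)$ and adding $(1+q^2)I$. Since the scalar matrix $(1+q^2)I$ commutes with $B$, the exponential factorizes, giving
\[
U_{\mathscr{L}}(t) = \exp\ob{it(1+q^2)I}\exp\ob{-itqB} = e^{i(1+q^2)t}\, U_B(-qt),
\]
where $U_B(s) = \exp(isB)$ denotes the adjacency transition matrix of $P_n((1-\omega)q)$. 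This single identity---a global scalar phase times a rescaling of the time variable by $-q$---is the engine behind both equivalences.

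For the PST equivalence, suppose $q$-Laplacian PST occurs from $\x$ to $\y$ at some $\tau \in \Rl^+$, so $U_{\mathscr{L}}(\tau)\x = \gamma\y$ with $|\gamma|=1$. Substituting the factorization gives $U_B(-q\tau)\x = \ob{e^{-i(1+q^2)\tau}\gamma}\y$, which is adjacency PST at time $-q\tau$ with a unit-modulus scalar. Since $q \neq 0$ this time is nonzero; if it happens to be negative I would invoke the conjugation symmetry $U_B(-s) = \overline{U_B(s)}$, valid because $B$ is real symmetric, together with the reality of $\x$ and $\y$, to replace it by PST at the positive time $q\tau$ with the conjugate scalar. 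The converse runs in the opposite direction: adjacency PST from $\x$ to $\y$ at a time $s$ yields $q$-Laplacian PST at time $-s/q$ (again correcting the sign by conjugation when needed), with the prefactor $e^{i(1+q^2)(-s/q)}$ harmlessly absorbed into the unit-modulus phase.

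For the PGST equivalence the same factorization is used, but now the global phase $e^{i(1+q^2)t}$ oscillates along the approximating sequence and need not converge; this is the one point requiring care. If $U_{\mathscr{L}}(\tau_k)\x \to \gamma\y$, then from $U_B(-q\tau_k)\x = e^{-i(1+q^2)\tau_k}U_{\mathscr{L}}(\tau_k)\x$ one sees that the convergence of the left-hand side is obstructed only by the oscillating unimodular prefactor. The fix is a compactness argument: the phases $e^{-i(1+q^2)\tau_k}$ lie on the unit circle, so a subsequence converges to some $\beta$ with $|\beta|=1$, and along that subsequence $U_B(-q\tau_{k_j})\x \to \beta\gamma\y$, establishing adjacency PGST (the positive-time requirement on the approximating sequence being handled by conjugation exactly as before). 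The reverse implication is symmetric. I expect this phase-compactness step to be the only genuine obstacle; the PST direction and the underlying factorization are routine once the identity $\mathscr{L}=(1+q^2)I-qB$ is in hand.
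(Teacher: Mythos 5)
Your proposal is correct and follows essentially the same route as the paper: the identity $\mathscr{L}=(1+q^2)I-qB$ with $B$ the adjacency matrix of $P_n((1-\omega)q)$, hence $U_{\mathscr{L}}(t)=e^{i(1+q^2)t}U_B(-qt)$, which the paper summarizes by saying the two walks are equivalent. You merely make explicit the details the paper leaves implicit (the sign-of-time fix via $U_B(-s)=\overline{U_B(s)}$ and the unit-circle subsequence argument for the oscillating phase in the PGST case), and these details are handled correctly.
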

If $\omega=0,$ then $P_n(\omega)$ reduces to a simple path on $n$ vertices. It is observed in \cite[Theorem 3.4 and Theorem 3.8]{kempton} that the path $P_n$ equipped with symmetric potential on $n \geq 4$ vertices does not exhibit PST relative to the adjacency matrix between its end vertices. For $n=2,$ the graph $P_2(q)$ admits PST relative to the adjacency matrix between its end vertices. In the case $n=3,$ by employing similar arguments as used in the proof of \cite[Theorem 3.3]{kempton2}, we conclude that $P_3(q)$ exhibits PST relative to the adjacency matrix between its end vertices. The following observation is immediate.

\begin{thm}\label{T9}
The path $P_n$ admits $q$-Laplacian perfect vertex state transfer between its end vertices if and only if $n =2,3$. For $n = 2$, perfect state transfer occurs for any $q$ at time $\frac{\pi}{2q}$. For $n = 3$, perfect state transfer occurs precisely when $q = \sqrt{\frac{8l^2}{k^2 - l^2}}$, where $k$ and $l$ are integers satisfying $k > l$ and $k \not\equiv l \pmod{2}$, and the state transfer time is $ \frac{\pi(k^2 - l^2)}{4l}.$
\end{thm}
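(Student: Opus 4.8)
The plan is to reduce the $q$-Laplacian problem to an adjacency problem via Theorem \ref{T8}, dispose of all $n\neq 2,3$ cheaply, and then carry out an explicit spectral computation in the single remaining case $n=3$. Since a simple path corresponds to $\omega=0$, Theorem \ref{T8} tells us that $P_n$ admits $q$-Laplacian vertex PST between its end vertices if and only if $P_n(q)$ admits adjacency PST between its end vertices. For $n\ge 4$ the graph $P_n(q)$ is exactly a path carrying a symmetric potential at its two end vertices, so the cited results \cite[Theorem 3.4 and Theorem 3.8]{kempton} rule out adjacency PST for every value of the potential, and hence rule out $q$-Laplacian PST for every $q$; this settles the ``only if'' direction. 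For $n=2$ I would compute directly: here $\mathscr{L}=I-qA$, whose transition matrix is $e^{it}\ob{\cos(qt)I-i\sin(qt)A}$, and setting $\cos(qt)=0$ yields PST at $\tau=\frac{\pi}{2q}$ for every $q$. The existence of PST for $n=3$ follows from the argument of \cite[Theorem 3.3]{kempton2}, so the entire remaining content of the theorem rests on pinning down the admissible $q$ and the transfer time when $n=3$.

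For the case $n=3$ I would work directly with
\[
\mathscr{L}=\begin{bmatrix} 1 & -q & 0\\ -q & 1+q^2 & -q\\ 0 & -q & 1\end{bmatrix}.
\]
Exploiting the reflection that swaps the two end vertices, the eigenvectors split into an antisymmetric vector $\frac{1}{\sqrt 2}(\e_1-\e_3)$ with eigenvalue $\theta_0=1$, and two symmetric vectors of the form $(a,b,a)^T$ whose eigenvalues $\theta_\pm=1+\frac{q^2}{2}\mp\frac{q}{2}\sqrt{q^2+8}$ are obtained from the reduced quadratic. A short check shows $a\neq 0$, so all three eigenvalues lie in the eigenvalue support of $\e_1$; moreover the reflection symmetry makes $\e_1$ and $\e_3$ strongly cospectral, with cospectrality sign $+1$ on the symmetric eigenspaces and $-1$ on the antisymmetric one, and one verifies $\theta_+<\theta_0<\theta_-$ for $q>0$.

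With strong cospectrality in hand, PST between $\e_1$ and $\e_3$ at time $\tau$ is equivalent to the phase equations $e^{i\tau\theta_+}=e^{i\tau\theta_-}=-e^{i\tau\theta_0}$. Writing $\tau(\theta_0-\theta_+)=\pi k_1$ and $\tau(\theta_--\theta_0)=\pi k_2$, these equations force $k_1,k_2$ to be positive odd integers, and adding and subtracting them gives $\tau q\sqrt{q^2+8}=\pi(k_1+k_2)$ together with $\tau q^2=\pi(k_2-k_1)$, whence
\[
\frac{q}{\sqrt{q^2+8}}=\frac{k_2-k_1}{k_1+k_2}.
\]
I would then substitute $k_2=k+l$, $k_1=k-l$; the parity requirement ``$k_1,k_2$ odd'' becomes $k\not\equiv l\pmod 2$, positivity of $k_1$ becomes $k>l$, and solving the displayed rational equation yields $q=\sqrt{\frac{8l^2}{k^2-l^2}}$. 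Finally $\tau q^2=\pi(k_2-k_1)=2\pi l$ gives $\tau=\frac{2\pi l}{q^2}=\frac{\pi(k^2-l^2)}{4l}$, as claimed.

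The spectral decomposition is routine; the main obstacle is the number-theoretic bookkeeping in the last step. One must correctly translate ``$k_1,k_2$ odd'' into $k\not\equiv l\pmod 2$, check that $k_2=k+l,\ k_1=k-l$ is a bijection onto the admissible parameter pairs so that no value of $q$ is lost or double-counted, and verify that the two linear relations $\tau q\sqrt{q^2+8}=\pi(k_1+k_2)$ and $\tau q^2=\pi(k_2-k_1)$ are simultaneously solvable---which is precisely the content of the ratio equation above. Some care is also needed to confirm $l\ge 1$, since $l=0$ would force the excluded value $q=0$.
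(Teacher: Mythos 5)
Your proposal is correct, and its skeleton is the same as the paper's: reduce to adjacency PST on $P_n(q)$ via Theorem~\ref{T8}, then invoke \cite[Theorems 3.4 and 3.8]{kempton} to eliminate all $n \geq 4$. Where you genuinely diverge is the case $n=3$. The paper disposes of it by asserting that ``similar arguments as in the proof of \cite[Theorem 3.3]{kempton2}'' yield PST for $P_3(q)$, so the quantitative half of the statement (which $q$ admit PST, and at what times) is left implicit in that citation; you instead derive it from scratch by diagonalizing the $3\times 3$ matrix $\mathscr{L}$ directly, and your computation checks out: the spectrum splits under the end-vertex reflection into $\theta_0 = 1$ (antisymmetric) and $\theta_\pm = 1 + \tfrac{q^2}{2} \mp \tfrac{q}{2}\sqrt{q^2+8}$ (symmetric), every eigenvalue lies in the support of $\e_1$ since a symmetric eigenvector $(a,b,a)^T$ with $a=0$ forces $b=0$, and strong cospectrality reduces PST to the phase conditions $e^{i\tau\theta_+} = e^{i\tau\theta_-} = -e^{i\tau\theta_0}$. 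Adding and subtracting the resulting integrality constraints gives $\tau q\sqrt{q^2+8} = \pi(k_1+k_2)$ and $\tau q^2 = \pi(k_2-k_1)$ with $k_1,k_2$ positive odd, and your substitution $k_2 = k+l$, $k_1 = k-l$ is indeed a bijection onto admissible pairs (sum and difference of two odd integers are even), with $l \geq 1$ forced by $\tau q^2 > 0$; this recovers exactly $q = \sqrt{8l^2/(k^2-l^2)}$, $k>l$, $k \not\equiv l \pmod 2$, and $\tau = \pi(k^2-l^2)/(4l)$. Your direct treatment of $n=2$ via $\mathscr{L} = I - qA$ is likewise correct and equivalent to the paper's reduction-based remark. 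In short, your route buys a self-contained proof of the ``precisely when'' clause, which in the paper is outsourced to an external argument the reader must reconstruct; the paper's route buys only brevity.
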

In \cite[Theorem 3]{kempton}, it is observed that the path $P_n(\omega)$ exhibits vertex PGST relative to the adjacency matrix between the end vertices with a suitable choice of potential $\omega$. Then, according to Theorem \ref{T8}, we have the following observation.
\begin{thm}
Let $P_n$ be a path on $n$ vertices. Then there exists $q \in \mathbb{R}$ such that $P_n$ admits $q$-Laplacian vertex PGST between its end vertices.
\end{thm}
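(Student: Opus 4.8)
The plan is to transfer Kempton's adjacency-matrix PGST result to the $q$-Laplacian setting through the equivalence recorded in Theorem \ref{T8}. Observe that a simple path $P_n$ is exactly $P_n(\omega)$ with vanishing end-vertex potential $\omega = 0$. Hence, by Theorem \ref{T8}, $P_n$ exhibits $q$-Laplacian vertex PGST between its end vertices if and only if $P_n\big((1-0)q\big) = P_n(q)$ exhibits vertex PGST between its end vertices with respect to its adjacency matrix. The task therefore reduces to exhibiting a single real $q$ for which the path $P_n(q)$ --- carrying adjacency potential $q$ at each end --- admits adjacency PGST between its ends.

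To do this, I would invoke \cite[Theorem 3]{kempton}, which supplies a real potential $\omega^{*}$ such that $P_n(\omega^{*})$ admits vertex PGST relative to its adjacency matrix between its two end vertices. Setting $q = \omega^{*}$ makes the adjacency path $P_n(q)$ coincide with $P_n(\omega^{*})$, so the adjacency-side hypothesis is satisfied by construction. Running this choice of $q$ back through Theorem \ref{T8} (with base potential $\omega = 0$) then transports the adjacency PGST of $P_n(q)$ into $q$-Laplacian PGST of the simple path $P_n$ between its end vertices, which is the desired conclusion.

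Since the argument is essentially a matching of parameters, I expect no substantive obstacle. The only point demanding attention is the admissibility of the chosen parameter: the equivalence in Theorem \ref{T8} converts the $q$-Laplacian end-potential $\omega$ into the adjacency end-potential $(1-\omega)q$, which at $\omega = 0$ is simply $q$, so Kempton's admissible potential $\omega^{*}$ is realized precisely by taking $q = \omega^{*}$. One should verify that this value is permissible as a $q$-Laplacian parameter, in particular that $q \neq 0$; this is not restrictive, since \cite[Theorem 3]{kempton} affords a nonzero potential whenever the bare path $P_n$ lacks adjacency PGST, and the free parameter $q$ ranges over all of $\mathbb{R}$, with $(1-0)q = q$ surjective onto $\mathbb{R}$.
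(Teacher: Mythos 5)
Your proposal is correct and follows essentially the same route as the paper: both reduce the claim via Theorem \ref{T8} (applied to the bare path, i.e.\ $\omega=0$, so that $q$-Laplacian PGST on $P_n$ is equivalent to adjacency PGST on $P_n(q)$) and then invoke \cite[Theorem 3]{kempton} to supply a suitable end-vertex potential, realized by choosing $q$ equal to it. Your additional remark on ensuring $q\neq 0$ is a sensible refinement of the paper's one-line argument, not a departure from it.
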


The only cycles that exhibit $q$-Laplacian pair PST are $C_4,~C_6,$ and $C_8$. For even $n$, there is a non-trivial involution $\phi$ of $C_n$ which fixes either two vertices or two edges, while for odd $n$, it fixes exactly one vertex and one edge.
We use Corollary~\ref{C1} to conclude the following.
\begin{thm}\label{T10}
 Let $P_n(\omega_1,\omega_2)$ denote a path on $n$ vertices with potentials $\omega_1$ and $\omega_2$ assigned only at the end vertices. Then $P_n(\omega_1,\omega_2)$ does not exhibit vertex PST with respect to its $q$-Laplacian matrix in any of the following cases:
	\begin{enumerate}
		\item $n \ge 2$ with $\omega_1=1+\frac{1}{q}$ and $\omega_2=1;$
		\item $n \geq 4$ with $\omega_1=\omega_2=1;$
		\item $n \geq 5$ with $\omega_1=\omega_2=1+\frac{1}{q}.$
    \end{enumerate}
\end{thm}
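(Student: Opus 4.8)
The plan is to realize each path $P_n(\omega_1,\omega_2)$ with the prescribed end potentials as the matrix $\mathscr{L}_-$ of a suitable cycle equipped with a reflection involution, and then to use Corollary \ref{C1} to translate the \emph{non}-existence of pair PST on that cycle into the absence of vertex PST on the path. Concretely, if $\mathscr{L}_-$ is the half-graph matrix of a cycle $C_m$ under an involution $\phi$, then Corollary \ref{C1} says that vertex PST relative to $\mathscr{L}_-$ between $\e_u$ and $\e_v$ is equivalent to pair PST relative to the $q$-Laplacian of $C_m$ between $\tfrac{1}{\sqrt{2}}(\e_u-\e_{\phi(u)})$ and $\tfrac{1}{\sqrt{2}}(\e_v-\e_{\phi(v)})$. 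Since it has already been established (via \cite{kim}, as cycles are regular) that the only cycles admitting $q$-Laplacian pair PST are $C_4,C_6,C_8$, any cycle $C_m$ with $m\notin\{4,6,8\}$ carries no pair PST between \emph{any} pair states, and hence the associated path carries no vertex PST between any pair of vertices.

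First I would identify the cycle and involution in each case by a folding computation. For a reflection $\phi$ of $C_m$, the half-graph $G'$ is a path, $\mathscr{L}'$ is its $q$-Laplacian computed with the original cycle degrees (so every diagonal entry equals $1+q^2$ and each path edge contributes $-q$), and $\mathscr{L}_-=\mathscr{L}'-A_\phi$. The essential bookkeeping is that a \emph{fixed edge} of $\phi$ incident to an end of $G'$ contributes an entry $-q$ to $A_\phi$ at that end, so $-A_\phi$ raises the corresponding diagonal entry to $1+q^2+q=1+q^2\bigl(1+\tfrac1q\bigr)$; comparing with the end-diagonal $1+q^2\omega$ of $\mathscr{L}(P_n(\omega))$, that end carries potential $\omega=1+\tfrac1q$. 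A \emph{fixed vertex} of $\phi$ contributes no crossing edge, leaving the end-diagonal equal to $1+q^2$, i.e.\ potential $\omega=1$. With this dictionary: for case (3) I take $C_{2n}$ with the reflection through two antipodal edge-midpoints, which has no fixed vertex and two fixed edges at the two ends, giving $\mathscr{L}_-=\mathscr{L}(P_n(1+\tfrac1q,1+\tfrac1q))$; for case (2) I take $C_{2n+2}$ with the reflection through two antipodal vertices, for which $A_\phi=\o$ and $\mathscr{L}_-=\mathscr{L}(P_n(1,1))$; and for case (1) I take the odd cycle $C_{2n+1}$, whose reflection fixes exactly one vertex and one edge, producing one end of potential $1$ (at the fixed vertex) and one end of potential $1+\tfrac1q$ (at the fixed edge), so $\mathscr{L}_-=\mathscr{L}(P_n(1+\tfrac1q,1))$.

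With these identifications, the conclusion follows by invoking the cycle classification through Corollary \ref{C1}. In case (1) the associated cycle is $C_{2n+1}$, which is odd and hence never one of $C_4,C_6,C_8$ for $n\geq 2$; in case (2) it is $C_{2n+2}$, which exceeds $8$ precisely when $n\geq 4$; and in case (3) it is $C_{2n}$, which exceeds $8$ precisely when $n\geq 5$. In each stated range the cycle admits no $q$-Laplacian pair PST, so Corollary \ref{C1} forces the path $P_n(\omega_1,\omega_2)$ to admit no vertex PST between any pair of vertices.

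The hard part will be the folding computation verifying that $\mathscr{L}_-$ equals $\mathscr{L}(P_n(\omega_1,\omega_2))$ with exactly the asserted end potentials. One must track that the end vertices of the half-graph inherit degree two from the cycle (so their unperturbed diagonal is $1+q^2$, not the value an isolated path endpoint would have), that the crossing edges enter $A_\phi$ with weight $-q$ rather than $+1$, and --- especially in the asymmetric case (1) --- that the single fixed vertex and the single fixed edge sit at opposite ends of the half-graph, yielding the mixed potentials $(1,\,1+\tfrac1q)$ up to reversing the path. Once this dictionary is pinned down, the remainder is a direct appeal to Corollary \ref{C1} together with the already-established classification of pair PST on cycles.
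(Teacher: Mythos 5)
Your proposal is correct and follows essentially the same route as the paper: the paper derives Theorem \ref{T10} by folding cycles along reflection involutions (fixing two vertices, two edges, or one of each), identifying $\mathscr{L}_-$ as the $q$-Laplacian of a path with end potentials $1$ (fixed-vertex end) or $1+\tfrac{1}{q}$ (fixed-edge end), and invoking Corollary \ref{C1} together with the classification that only $C_4$, $C_6$, $C_8$ admit $q$-Laplacian pair PST. Your explicit dictionary and the cycle sizes $C_{2n+1}$, $C_{2n+2}$, $C_{2n}$ for cases (1)--(3) match the paper's intended argument exactly, including the sharp thresholds $n\geq 2$, $n\geq 4$, $n\geq 5$.
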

The case in Theorem~\ref{T10}(2) with $q=1$ corresponds to the Laplacian matrix. The Laplacian matrix of $P_n(1,1)$ can be expressed as $L = 2I - A$, where $A$ denotes the adjacency matrix of $P_n$. Consequently, it follows that $P_n$ does not admit vertex PST relative to the adjacency matrix whenever $n \geq 4,$ which supports the result provided in \cite{christandl}. Similarly, for $q=-1$, the case in Theorem~\ref{T10}(3) corresponds to the signless Laplacian matrix. In this case, the result reduces to the path $P_n$ with $n \geq 5$ vertices, which does not admit vertex PST. This observation is consistent with \cite[Corollary~5]{alvir}.

Next, we include a few observations regarding pair state transfer on the path $P_n(\omega)$. It can be observed that $P_n(\omega)$ admits a non-trivial involution $\phi$ which fixes an edge when $n$ is even, and fixes a vertex when $n$ is odd. The matrix $\mathscr{L}_-$ associated with the involution $\phi$ can be evaluated as 
$$\mathscr{L}_{-} =\begin{cases}
    q^2\omega\e_1\e_1^T+\mathscr{L}(P_k)+(q^2+q)\e_k\e_k^T, & \text{whenever } n=2k,\\
    q^2\omega\e_1\e_1^T+\mathscr{L}(P_k)+q^2\e_k\e_k^T, &\text{whenever } n=2k+1.
\end{cases}$$
Since the paths $P_2$ and $P_3$ admit vertex PST relative to the adjacency matrix, we conclude the following using Corollary \ref{C1}:
\begin{exm}\label{Ex2}
Let $P_n(\omega)$ denote the path on $n$ vertices having potential $\omega$ only at the end vertices. Then the following hold relative to the $q$-Laplacian matrix:
\begin{enumerate}
\item $P_3(1)$ admits PST between $\frac{1}{\sqrt{2}}(\e_1-\e_2)$ and $\frac{1}{\sqrt{2}}(\e_2-\e_3)$ at time $\frac{\pi}{q\sqrt{2}}.$ 
\item $P_4\ob{1+\frac{1}{q}}$ admits PST between $\frac{1}{\sqrt{2}}(\e_1-\e_4)$ and $\frac{1}{\sqrt{2}}(\e_2-\e_3)$ at time $\frac{\pi}{2q}.$
\item $P_5(1)$ admits PST between $\frac{1}{\sqrt{2}}(\e_1-\e_5)$ and $\frac{1}{\sqrt{2}}(\e_2-\e_4)$ at time $\frac{\pi}{2q}.$
\item $P_7(1)$ admits PST between $\frac{1}{\sqrt{2}}(\e_1-\e_7)$ and $\frac{1}{\sqrt{2}}(\e_3-\e_5)$ at time $\frac{\pi}{q\sqrt{2}}.$
\end{enumerate}
\end{exm}
It can be observed that if a tree is attached to $P_5(1)$ in Example \ref{Ex2}(3) as illustrated in Figure \ref{f5}$(i)$, then PST occurs in the resulting graph between the same pair states. Thus, we have the following observation.
\begin{thm}
There exist infinitely many trees with potential at exactly two vertices that admit perfect pair state transfer relative to the $q$-Laplacian matrix.    
\end{thm}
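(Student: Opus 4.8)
The plan is to leverage the machinery already established, namely Example \ref{Ex2}(3) together with Corollary \ref{C1}, and to show that attaching an arbitrary tree in an involution-symmetric fashion preserves the relevant vertex state transfer in the half graph. The key point is that $P_5(1)$ exhibits pair PST between $\frac{1}{\sqrt{2}}(\e_1-\e_5)$ and $\frac{1}{\sqrt{2}}(\e_2-\e_4)$, and this pair PST arises, via Corollary \ref{C1}, from vertex PST with respect to the matrix $\mathscr{L}_-$ of the half graph induced by the involution $\phi$ that reflects $P_5(1)$ about its central vertex $3$. Under this reflection, the central vertex is fixed, and the half graph is a path on three vertices (the orbits of $1,2$ together with the fixed vertex $3$) carrying the inherited potential. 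The strategy is to attach, at the fixed central vertex $3$, an arbitrary rooted tree $T$, and extend $\phi$ to the enlarged graph by fixing every vertex of $T$. This yields a non-trivial involution of the new tree whose fixed-vertex set $S$ now contains all of $V(T)$ together with the old central vertex.

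First I would verify that attaching $T$ at the fixed vertex leaves $\mathscr{L}_-$ unchanged. By Lemma \ref{L1} and Theorem \ref{T2}, the block $\mathscr{L}_-=\mathscr{L}'-A_\phi$ depends only on the half graph $G'$ obtained by selecting one vertex from each orbit $(v,\phi(v))$ with $v\notin S$, together with the submatrix $A_\phi$ of edges across the involution; it does not involve the fixed-vertex block $\mathscr{L}_S$ or the connecting block $A_S$ at all. Since the attached tree $T$ is fixed pointwise by the extended involution, every vertex of $T$ lies in $S$, so $T$ contributes only to $\mathscr{L}_S$ and $A_S$. Consequently the matrix $\mathscr{L}_-$ for the enlarged graph is identical to the one for $P_5(1)$, and the relevant eigenvectors and eigenvalues governing the $\e_1-\e_5$ to $\e_2-\e_4$ transfer are untouched. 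Therefore vertex PST relative to $\mathscr{L}_-$ persists, and Corollary \ref{C1} transports this back to pair PST in the enlarged tree between the stated pair states, at the same time $\frac{\pi}{2q}$.

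Next I would establish that distinct choices of $T$ (for instance, attaching paths of arbitrary length, as in Figure \ref{f5}$(i)$) yield non-isomorphic trees, thereby producing an infinite family. One may simply attach a pendant path of length $N$ at the central vertex and let $N$ range over the positive integers; since the number of vertices $5+N$ grows without bound, these are pairwise non-isomorphic, and each is a genuine tree (attaching a tree to a tree at a single vertex preserves acyclicity). Each such graph carries potential only at the two original end vertices $1$ and $5$, satisfying the ``potential at exactly two vertices'' requirement in the statement.

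The main obstacle, and the only point requiring genuine care, is confirming that the extended map $\phi$ remains a valid involution of the enlarged graph and that the potential remains symmetric under it. Because $T$ is attached precisely at the fixed point $3$, and $\phi$ is extended as the identity on $T$, one must check that no edge of the enlarged graph crosses the involution in a way that disturbs the block structure: every newly added edge is incident only to vertices of $S$, so it sits entirely inside the $\mathscr{L}_S$ block, leaving $A_\phi$ and the half-graph block $\mathscr{L}'$ exactly as before. The potential symmetry $\eta(v)=\eta(\phi(v))$ holds trivially on $T$ (identity orbits) and is inherited from $P_5(1)$ on the orbits $(1,5)$ and $(2,4)$. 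Once these verifications are in place, the conclusion is immediate from the invariance of $\mathscr{L}_-$ together with Corollary \ref{C1}.
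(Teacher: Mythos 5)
Your proposal is correct and follows essentially the same route as the paper: attach an arbitrary tree at the fixed central vertex $3$ of $P_5(1)$, extend the involution by the identity on the attached tree, observe that all new edges lie in the $\mathscr{L}_S$ block so that $\mathscr{L}'$, $A_\phi$, and hence $\mathscr{L}_-$ are unchanged, and invoke Corollary \ref{C1} to transfer the vertex PST in the half graph back to pair PST between $\frac{1}{\sqrt{2}}(\e_1-\e_5)$ and $\frac{1}{\sqrt{2}}(\e_2-\e_4)$. The paper states this in one sentence with reference to Figure \ref{f5}$(i)$; your write-up supplies the block-structure verification and the non-isomorphism argument for infinitude that the paper leaves implicit.
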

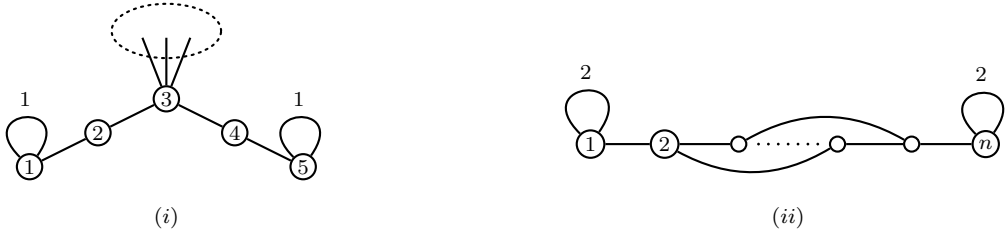
\begin{figure}
	\centering
	\begin{minipage}[t]{0.4\textwidth}
		\centering
		\begin{tikzpicture}[scale=0.45]
			\node[circle,thick, draw,inner sep=1pt] (A) at (-4,0) {\scriptsize $1$};
			\node[circle,thick,draw,inner sep=1pt] (B) at (-2,1) {\scriptsize $2$};
			\node[circle,thick,draw,inner sep=1pt] (C) at (0,2) {\scriptsize $3$};
			\node[circle,thick,draw,inner sep=1pt] (D) at (2,1) {\scriptsize $4$};
			\node[circle,thick,draw,inner sep=1pt] (E) at (4,0) {\scriptsize $5$};
			
			\draw[thick] (A) -- (B);
			\draw[thick] (B) -- (C);
			\draw[thick] (C) -- (D);
			\draw[thick] (D) -- (E);
			
			\draw[thick] (A) to[out=135, in=55, looseness=10] node[above] {\scriptsize $1$} (A);
			\draw[thick] (E) to[out=135, in=55, looseness=10] node[above] {\scriptsize $1$} (E);
			
			\draw[dotted, line cap=round, line width=0.8pt] (0,4) ellipse (1.6cm and 0.8cm);
			\draw[thick] (C) -- (0,3.8);
			\draw[thick] (C) -- (-0.7,3.8);
			\draw[thick] (C) -- (0.7,3.8);
			\node at (0,-1.5) {\scriptsize$(i)$};
			
		\end{tikzpicture}
	\end{minipage}
\hfill
	\begin{minipage}[t]{0.58\textwidth}
		\centering
		\begin{tikzpicture}[scale=0.65]
			\node[circle,thick,draw,inner sep=1.3pt] (A) at (-5,0) {\scriptsize $1$};
			\node[circle,thick,draw,inner sep=1.3pt] (B) at (-3.5,0) {\scriptsize $2$};
			\node[circle,thick,draw,inner sep=2pt] (C) at (-2,0) {};
			\node[circle,thick,draw,inner sep=2pt] (F) at (0,0) {};
			\node[circle,thick,draw,inner sep=2pt] (D) at (1.5,0) {};
			\node[circle,thick,draw,inner sep=1.3pt] (E) at (3,0) {\scriptsize $n$};

			\draw[thick] (A) -- (B);
			\draw[thick] (B) -- (C);
			\draw[thick] (D) -- (F);
			\draw[thick] (D) -- (E);
			\draw[thick,bend left] (F) to (B);
			\draw[thick, bend left] (C) to (D);
			\foreach \x in {-1.6,-1.4,-1.2,-1,-0.8,-0.6,-0.4} {\fill (\x,0) circle (0.8pt); }
			
			\draw[thick] (A) to[out=135, in=55, looseness=10] node[above] {\scriptsize $2$} (A);
			\draw[thick] (E) to[out=135, in=55, looseness=10] node[above] {\scriptsize $2$} (E);
            \node at (-1,-1.5) {\scriptsize$(ii)$};
		\end{tikzpicture}
	\end{minipage}
	\caption{Pair PST in edge-perturbed graphs with potential.}
	\label{f5}
\end{figure}
 It is possible to have $q$-Laplacian pair PST in a path $P_n$ with a few additional edges and assigning appropriate potentials as illustrated in Figure \ref{f5}$(ii)$, where pair PST occurs between $\frac{1}{\sqrt{2}}(\mathbf{e}_1 - \mathbf{e}_n)$ and $\frac{1}{\sqrt{2}}(\mathbf{e}_2 - \mathbf{e}_{n-1})$ at time $\frac{\pi}{2q}$, as follows from Corollary~\ref{C1}.

\begin{thm}
Let $P_n$ be the path on $n \geq 6$ vertices. By adding the edges $\{2,n-2\}$ and $\{3,n-1\}$ and assigning a potential of $2$ to the end vertices $1$ and $n$, the resulting graph admits $q$-Laplacian perfect pair state transfer.
\end{thm}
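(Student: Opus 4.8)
The plan is to apply Corollary~\ref{C1}, which reduces the question of $q$-Laplacian pair PST in the perturbed graph to a question of vertex PST in a smaller structure induced by a suitable involution. First I would exhibit the involution $\phi$ on the graph $G$ obtained from $P_n$ by adding the edges $\{2,n-2\}$ and $\{3,n-1\}$ and placing potential $2$ at vertices $1$ and $n$. The natural candidate is the reflection $\phi(j)=n+1-j$, which maps $1\leftrightarrow n$, $2\leftrightarrow n-1$, $3\leftrightarrow n-2$, and so on. One must check that $\phi$ is an automorphism of $G$: the added edges $\{2,n-2\}$ and $\{3,n-1\}$ are interchanged by $\phi$ (since $\phi$ sends $2\mapsto n-1$, $n-2\mapsto 3$, and $3\mapsto n-2$, $n-1\mapsto 2$), the path edges are preserved, and the potential assignment is symmetric because vertices $1$ and $n$ both carry potential $2$. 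Depending on the parity of $n$, the involution either fixes a single central vertex (odd $n$) or a central edge (even $n$), matching the situation described for paths in the preceding discussion.

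Next I would compute the matrix $\mathscr{L}_-=\mathscr{L}'-A_\phi$ for the half graph induced by $\phi$. Since $\phi$ has no fixed vertices contributing to $S$ beyond the central structure, the block $A_\phi$ records exactly the edges sent across the involution; here the across-involution adjacencies come from the central path edge and from the added edges $\{2,n-2\}$ and $\{3,n-1\}$ when their endpoints lie in different orbits. The goal is to show that $\mathscr{L}_-$, up to the scaling by $q$ and the diagonal shift inherent in the $q$-Laplacian, is similar to the adjacency matrix of a shorter path $P_m$ on roughly $n/2$ vertices with the correct end-vertex potentials cancelling out. The potential $2$ at the end vertices is chosen precisely so that, after the reduction in \eqref{E1}, the effective potential on the reduced path becomes the value for which adjacency vertex PST is known; comparing with the $\mathscr{L}_-$ formulas displayed for $P_n(\omega)$ earlier, the potential $2$ plays the role that forces the half-graph to be (a scalar multiple of) a bare path admitting PST between its end vertices.

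I would then identify the states. By Corollary~\ref{C1}, pair PST between $\frac{1}{\sqrt{2}}(\e_1-\e_n)$ and $\frac{1}{\sqrt{2}}(\e_2-\e_{n-1})$ in $G$ relative to $\mathscr{L}$ is equivalent to vertex PST in the half graph between the vertices corresponding to the orbit representatives of $\{1,n\}$ and $\{2,n-1\}$, which are the two end vertices of the reduced path. Invoking the known existence of adjacency vertex PST between the end vertices of $P_2$ or $P_3$ (established via Theorem~\ref{T9} and the discussion around Example~\ref{Ex2}), and translating through the equivalence of quantum walks relative to $\mathscr{L}_-$ and the adjacency matrix, yields pair PST in $G$ at time $\frac{\pi}{2q}$.

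The main obstacle will be the explicit verification that $\mathscr{L}_-$ reduces to exactly the right short path with the right potentials. The added edges $\{2,n-2\}$ and $\{3,n-1\}$ are engineered to ``close off'' the two ends so that each orbit pair of degree-two interior path vertices yields a clean tridiagonal half-graph matrix, but one must confirm that these extra adjacencies do not introduce spurious off-diagonal entries that break the path structure of $\mathscr{L}_-$, and that the diagonal entry produced by the potential $2$ together with the degree contributions gives precisely the end-vertex value needed for PST. Once that bookkeeping is done carefully for both parities of $n$, the conclusion follows immediately from Corollary~\ref{C1}.
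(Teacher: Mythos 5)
Your overall strategy --- the reflection involution $\phi(j)=n+1-j$ together with Corollary~\ref{C1} --- is exactly the paper's route, and your identification of the involution and of the contents of $A_\phi$ (the central edge plus the two added edges) is correct. However, the key computational step is misdescribed, and the mechanism you propose cannot work. You assert that $\mathscr{L}_-=\mathscr{L}'-A_\phi$ should turn out to be (a shift and scaling of) the adjacency matrix of a path on roughly $n/2$ vertices, and that the required vertex PST is \emph{between the end vertices of that reduced path}. Two problems: first, the orbit representatives of $\{1,n\}$ and $\{2,n-1\}$ are the \emph{adjacent} half-graph vertices $1$ and $2$, not the two ends of the half path (the other end of the half path lies near the middle of $P_n$), so your identification of the states is inconsistent with your picture of $\mathscr{L}_-$; second, no path on four or more vertices, bare or with symmetric potentials at its ends, admits end-to-end PST --- this is precisely the content of the Kempton results and Theorem~\ref{T10} quoted in this same paper --- so for $n\geq 8$ the mechanism you describe is impossible. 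Your appeal to ``PST on $P_2$ or $P_3$'' is the right instinct, but nothing in your argument explains why the relevant structure is that small.

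What actually happens is the opposite of your stated worry that the extra adjacencies might ``break the path structure'' of $\mathscr{L}_-$: breaking it is the whole point. In the half graph on $\{1,\dots,k\}$ with $k=\lfloor n/2\rfloor$, the block $A_\phi$ has entry $-q$ in position $(2,3)$, because $\{2,n-2\}$ is an edge and $n-2=\phi(3)$, and symmetrically in position $(3,2)$ from the edge $\{3,n-1\}$. Hence in $\mathscr{L}_-=\mathscr{L}'-A_\phi$ these entries cancel the path edge $\{2,3\}$ present in $\mathscr{L}'$, and $\mathscr{L}_-$ splits as a direct sum of a block on $\{1,2\}$ and a block on $\{3,\dots,k\}$ (for even $n$ the central edge $\{k,k+1\}$ merely adds $q$ to the $(k,k)$ diagonal entry, which does not affect the $\{1,2\}$ block; for odd $n$ it does not enter $A_\phi$ at all). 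The potential $2$ at vertex $1$ is chosen exactly so that vertex $1$ (degree $1$ plus potential $2$) and vertex $2$ (degree $3$, counting the added edge) carry the same diagonal entry $1+2q^2$, making the isolated block equal to $(1+2q^2)I-qA(P_2)$. This block exhibits vertex PST between its two vertices at time $\frac{\pi}{2q}$, and Corollary~\ref{C1} then yields pair PST in the perturbed graph between $\frac{1}{\sqrt{2}}(\e_1-\e_n)$ and $\frac{1}{\sqrt{2}}(\e_2-\e_{n-1})$ at the same time. Without this cancellation-and-disconnection observation, the ``bookkeeping'' you defer cannot be completed as you envision it.
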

\section{Acknowledgments}
The authors sincerely thank Professor Sivaramakrishnan Sivasubramanian, IIT Bombay, India, for his valuable suggestions.

\bibliographystyle{abbrv}
\bibliography{Ref}
\end{document}